\documentclass[11pt]{amsart}
\usepackage[margin=1in]{geometry}

\usepackage{amsthm,amsmath,amssymb,color}
\usepackage{adjustbox,bm,booktabs,caption,flexisym,float,longtable,multirow,pdflscape,bbm}
\usepackage{verbatim, appendix}
\usepackage[hidelinks]{hyperref}

\restylefloat{table}

\theoremstyle{definition}
\newtheorem{defi}{Definition}[section]
\newtheorem{ex}[defi]{Example}

\theoremstyle{plain}
\newtheorem{thm}[defi]{Theorem}
\newtheorem{lemma}[defi]{Lemma}
\newtheorem{cor}[defi]{Corollary}
\newtheorem{prop}[defi]{Proposition}

\newcommand{\lr}[1]{\lbrace #1 \rbrace}

\newcommand{\AGL}{{\rm AGL}}
\newcommand{\PGL}{{\rm PGL}}
\newcommand{\AGamL}{{\rm A \Gamma L}}
\newcommand{\PGamL}{{\rm P \Gamma L}}
\newcommand{\GF}{{\rm GF}}
\newcommand{\PG}{{\rm PG}}
\newcommand{\CT}{{\footnotesize \sf CT}}
\newcommand{\hd}{{\rm hd}}

\makeatletter
\def\imod#1{\allowbreak\mkern10mu({\operator@font mod}\,\,#1)} 
\def\@setcopyright{}                                           
\def\serieslogo@{}
\makeatother

\begin{document}

\author[C.~Amarra]{Carmen Amarra}
\address[C.~Amarra]{Institute of Mathematics, University of the Philippines Diliman, 1101 Quezon City, Philippines}
\email{mcamarra@math.upd.edu.ph}

\author[D.V.A.~Briones]{Dom Vito A.~Briones}
\address[D.V.A.~Briones]{Institute of Mathematics, University of the Philippines Diliman, 1101 Quezon City, Philippines}
\email{dabriones@up.edu.ph}

\author[M.J.C.~Loquias]{Manuel Joseph C.~Loquias}
\address[M.J.C.~Loquias]{Institute of Mathematics, University of the Philippines Diliman, 1101 Quezon City, Philippines}
\email{mjcloquias@math.upd.edu.ph}

\title{Multiple Contractions of Permutation Arrays}

\begin{abstract}{Given a permutation $\sigma$ on $n$ symbols $\lr{0, 1, \ldots, n-1}$ and an integer $1 \leq m \leq n-1$, the $m$th contraction of $\sigma$ is the permutation $\sigma^{\CT^m}$ on $n-m$ symbols obtained by deleting the symbols $n-1, n-2, \ldots, n-m$ from the cycle decomposition of $\sigma$. The Hamming distance $\hd(\sigma,\tau)$ between two permutations $\sigma$ and $\tau$ is the number of symbols $x$ such that $\sigma(x) \neq \tau(x)$.
In this paper we give a complete characterization of the effect of a single contraction on the Hamming distance between two permutations.
This allows us to obtain sufficient conditions for $\hd(\sigma,\tau)-\hd(\sigma^{\CT^m},\tau^{\CT^m})\leq 2m$.
}\end{abstract}

\subjclass[2010]{05A05, 94B25, 94B65}

\keywords{Permutation Arrays, Hamming distance, Contraction, AGL(1,q), PGL(2,q)}

\date{\today}

\maketitle

\section[Introduction]{Introduction}\label{sec1}

A \emph{permutation array} on $n$ symbols is a non-empty subset of the symmetric group $S_n$ on the $n$~symbols $\lr{0,1,\ldots,n-1}$. 
The \emph{Hamming distance} $\hd(\sigma,\tau)$ between two permutations $\sigma$ and $\tau$ in $S_n$ is the number of symbols $x$ such that $\sigma(x)\neq\tau(x)$, and the Hamming distance of a permutation array $P$ is the minimum among all Hamming distances between pairs of distinct permutations in $P$. For positive integers $n$ and $d$, $M(n,d)$ denotes the maximum size of a permutation array on $n$ symbols with Hamming distance~$d$.

The value of $M(n,d)$ for arbitrary $n$ and $d$ remains unsolved. 
Among the known general lower bounds for $M(n,d)$ are the Gilbert-Varshamov bound \cite{Deza_Max_Number_Permutations_1977} and $M(n,d)\geq n!/q^{d-2}$, where $q\geq n$ is a prime power \cite{Micheli2020}. 
For some special cases, exact values 
were obtained using combinatorial techniques \cite{Deza_Bounds_permutation_arrays_1978} or permutation group theory \cite{Deza_Max_Number_Permutations_1977}. 
For instance, results from \cite{Deza_Max_Number_Permutations_1977,Tent_Sharply_3_Transitive_2016} tell us that for prime powers $q$,
$M(q,q-1)=q(q-1)$ and $M(q+1,q-1)=q(q+1)(q-1)$.
In addition to this, methods for constructing permutation arrays that yield lower bounds for $M(n,d)$, for certain $n$ and $d$, have also been developed. These include the method of partitions and extensions \cite{Sudborough_Improving_MOLS_Bounds_2017}, permutation polynomials \cite{Chu_Codes_powerline_2004}, coset search algorithms \cite{Sudborough_Constructing_2018}, rational polynomials \cite{Bereg2021}, and clique search and the use of automorphisms or isometries \cite{Smith_table_permutation_codes_2012,Chu_Codes_powerline_2004,Janiszczak_permutation_codes_isometries_2015}.    

In \cite{Sudborough_Constructing_2018}, Bereg et al.~introduced the method of contraction to obtain a new permutation array from a given one. 
The \emph{contraction} of a permutation $\sigma$ in $S_n$ is the permutation $\sigma^{\CT}$ in $S_{n-1}$ obtained by deleting the symbol $n-1$ from the disjoint cycle notation of $\sigma$. Taking the contraction of each permutation in a permutation array $P$ yields the contraction $P^{\CT}$ of $P$. 
Bereg et al.~examined the effect of contraction on the Hamming distance of a given permutation array. They found that the Hamming distance between any two permutations can decrease by at most $3$ after one contraction, and they provided necessary and sufficient conditions in order for the decrease in Hamming distance to be equal to $3$. Moreover, they gave sufficient conditions for the Hamming distance between two permutations to decrease by $2$ after one contraction and to decrease by at most $4$ after two contractions.
They applied their results to the affine general linear group and the projective general linear group to yield lower bounds for $M(n,d)$ where $(n,d)$ is of
the form $(q,q-3)$ or $(q-1,q-3)$, where $q$ is a prime power such that $3\nmid (q-1)$, and of the form $(q-2,q-5)$, 
where $q$ is a prime power such that $q\equiv 2\imod{3}$ and $q\not\equiv 0,1\imod{5}$.
In \cite{Bereg2019}, lower bounds for $M(n,d)$ were obtained for the case where $(n,d)$ is either $(q,q-3)$ or $(q-1,q-3)$ for some prime power $q \equiv 1 \imod{3}$, using contraction graphs of certain permutation arrays. These are graphs whose vertices are the elements of the permutation array, with adjacency defined using conditions on the Hamming distance of two permutations and the Hamming distance of their contractions. Furthermore, Bereg et. al.~derived other lower bounds using contractions of permutations obtained from permutation rational functions \cite{Bereg2021}. These works suggest that contraction, used in combination with other computational methods, is a useful tool in deriving lower bounds for $M(n,d)$.

This paper extends the combinatorial work in \cite{Sudborough_Constructing_2018} on the effect of contraction on the Hamming distance between two permutations.
In particular, we identify necessary and sufficient conditions for $\hd(\sigma,\tau)-\hd(\sigma^{\CT},\tau^\CT)=d$ for each $d\in\{0,1,2,3\}$.
Moreoever, we consider the effect of more than two contractions on the Hamming distance of a permutation array. 
Our main result is Theorem \ref{theorem:lowerbound}, which gives a lower bound for the Hamming distance between the $m$th contractions
$\sigma^{\CT^m}$ and $\tau^{\CT^m}$ of two permutations $\sigma$ and $\tau$. 

\setcounter{section}{5}
\setcounter{defi}{1}
\begin{thm} 
	Let $\sigma, \tau \in S_n$ with $\hd(\sigma,\tau) = d$. Let $m \in \{1, \ldots, n-1\}$ and assume that the disjoint cycle decomposition of $\sigma^{-1}\tau$ has no factor of odd length $\ell$ where $3 \leq \ell \leq 2m+1$. Then $\hd\big(\sigma^{\CT^m},\tau^{\CT^m}\big) \geq d - 2m$. 
\end{thm}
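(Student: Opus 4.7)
The plan is to prove this by induction on $m$. The base case $m=1$ reduces to the assertion that if $\sigma^{-1}\tau$ has no $3$-cycle, then $\hd(\sigma,\tau) - \hd(\sigma^{\CT},\tau^{\CT}) \leq 2$. This follows directly from the single-contraction characterization already established in the earlier sections of the paper, which shows that the decrease equals $3$ only when $\sigma^{-1}\tau$ contains a $3$-cycle in a specific configuration involving $n-1$; ruling out $3$-cycles therefore caps the loss at $2$.

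The most natural attempt at the inductive step---applying the induction hypothesis with parameter $m-1$ to the contracted pair $(\sigma^{\CT},\tau^{\CT})$---runs into a serious obstruction: a single contraction can introduce short odd cycles in $(\sigma^{\CT})^{-1}\tau^{\CT}$ that are absent in $\sigma^{-1}\tau$. For instance, when two $2$-cycles of $\sigma^{-1}\tau$ merge during a contraction, a $3$-cycle appears in $(\sigma^{\CT})^{-1}\tau^{\CT}$, so the hypothesis needed to iterate the induction on the contracted pair does not automatically transfer.

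To get around this, the strategy is an amortized charging argument. Let $a=\sigma^{-1}(n-1)$, $b=\sigma(n-1)$, $c=\tau^{-1}(n-1)$, $d=\tau(n-1)$, and consider the cycle $C_1$ of $\pi=\sigma^{-1}\tau$ containing $n-1$. Either $c \in C_1$ (a splitting case, in which $C_1$ of length $k$ splits into two cycles of lengths $i$ and $k-1-i$), or $c$ lies in a different cycle $C_j$ (a merging case, in which $C_1$ and $C_j$ are replaced by a single cycle of length $|C_1|+|C_j|-1$). A direct case analysis shows the Hamming distance loss equals $3$ only when a $3$-cycle is split. Moreover, any short odd cycle newly created in $(\sigma^{\CT})^{-1}\tau^{\CT}$ arises either from a merge (loss at most $1$) or from splitting a cycle of length $\geq 4$ (loss at most $2$), each of which provides \emph{slack} relative to the target budget of $2$ per contraction. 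Because the hypothesis excludes short odd cycles of length $3$ through $2m+1$ in $\pi$ itself, every short odd cycle appearing in an intermediate $(\sigma^{\CT^i})^{-1}\tau^{\CT^i}$ must have been created by some earlier contraction carrying such slack. Charging each unit of excess loss (beyond $2$) in a later contraction to the slack of the corresponding creation event should then bound the total loss over the $m$ contractions by $2m$.

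The main obstacle is executing this charging argument rigorously. One must track how short odd cycles are created, persist, and are destroyed across the full $m$-step contraction sequence, verify that each loss-$3$ contraction has a unique earlier creation event to be charged against, and confirm that the range $3 \leq \ell \leq 2m+1$ in the hypothesis is exactly the right one to preclude all problematic scenarios. This reduces to a delicate combinatorial lemma on the evolution of cycle lengths under iterated contractions, requiring case analysis on the positions of the removed symbols $n-1, n-2, \ldots, n-m$ within the cycle decomposition of $\pi$.
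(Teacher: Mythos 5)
Your overall strategy is the right one and is essentially the paper's: the base case is handled exactly as you say via the single-contraction characterization, and the paper likewise abandons the naive induction (for exactly the reason you identify) in favour of tracing short odd cycles back through the contraction sequence with an accounting of losses. However, as written your proposal has a genuine gap: the entire weight of the argument rests on the ``delicate combinatorial lemma'' that you explicitly defer, and the partial classification you do give of how short odd cycles are created is not accurate enough to carry the charging argument. In particular, you omit the loss-$2$ channel, in which a cycle of odd length $\ell+2$ is merely shortened to an odd cycle of length $\ell$ (no merge, no split); this is the mechanism by which a long odd cycle of $\sigma^{-1}\tau$, not excluded by the hypothesis, could in principle descend step by step into the forbidden range and eventually be destroyed by a loss-$3$ contraction. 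A charging scheme that only charges excess loss to ``creation events'' with slack does not obviously close off this scenario, because a loss-$2$ step carries zero slack against a budget of $2$ per step. You also conflate splits with loss-$2$ events (splits occur at loss $1$, when the cycle through $n-1$ breaks into two pieces of total length one less), so the slack bookkeeping in your sketch is misaligned with the actual case analysis.

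What the paper does to close this is worth comparing against your plan. It first proves a precise evolution lemma (its Lemma 5.1, resting on a complete fourteen-case table for a single contraction): given any set $\mathcal{C}$ of odd cycles of $(\sigma^{\CT})^{-1}\tau^{\CT}$, it produces a set $\mathcal{D}$ of odd cycles of $\sigma^{-1}\tau$ whose cardinality drops by at most $1$ only when the loss is $1$, increases by $1$ when the loss is $3$, and whose total length grows by at most the loss at that step. Running this backwards from the final $3$-cycle that witnesses the last loss-$3$ step, and writing $a,b,c$ for the numbers of steps with loss $1,2,3$, one gets a non-empty set $E_0$ of odd cycles of $\sigma^{-1}\tau$ (since $a<c$ whenever the total loss exceeds $2m+2$) of total length at most $a+2b+3c=2m+3$, so every cycle in $E_0$ has odd length between $3$ and $2m+3$, contradicting the hypothesis. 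Note that it is this \emph{total-length} bound, tracking both persistence and shortening simultaneously, that defeats the loss-$2$ descent scenario --- not a per-event slack charge. To complete your proof you would need to state and prove the analogue of this evolution lemma in full (which requires the complete case analysis of a single contraction, not just the split/merge dichotomy) and replace the informal charging by the two explicit inequalities on $\vert E_i\vert$ and on the sum of cycle lengths in $E_i$.
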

\setcounter{section}{1}

Applying Theorem \ref{theorem:lowerbound} to the group $\PGL(2,q)$ for prime powers $q$ yields new lower bounds for $M(q-1,q-5)$, which we show in Table \ref{tab:Table 0.5}. Other lower bounds we obtain are improvements on the Gilbert-Varshamov bound \cite{Deza_Max_Number_Permutations_1977} and the bound $M(n,d)\geq n!/q^{d-2}$ in \cite{Micheli2020}, but not of some of the other current best lower bounds \cite{Chu_Codes_powerline_2004,Smith_table_permutation_codes_2012,Sudborough_Constructing_2018,Bereg2021}. 

The rest of this paper is organized as follows: In Section 2 we define the contraction of a permutation and give its basic properties. 
Next, we consider the contraction $P^{\CT}$ of a given permutation array $P$ in Section 3, and compare its size and Hamming distance with that of $P$. 
In Section 4, we determine necessary and sufficient conditions for each possible value of $\hd(\sigma,\tau) - \hd(\sigma^{\CT}, \tau^{\CT})$, for any two distinct permutations $\sigma$ and $\tau$. 
We push further in Section 5 and consider the $m$th contraction $P^{\CT^m}$ of a given permutation array $P$ for any $m$, 
and use the results of the previous sections to compare its size and Hamming distance with that of $P$. 
We conclude the paper in Section 6 by putting in context the lower bounds obtained in Section 5.

\section[Preliminaries]{Preliminaries}

We consider the symmetric group $S_n$ on the $n$ symbols $\lr{0, 1, \ldots, n-1}$.
We adopt the convention that compositions of permutations are computed from left to right.
That is, given $\sigma,\tau\in S_n$ and $x\in\lr{0,  \ldots, n-1}$, $\tau\sigma(x):=\sigma(\tau(x))$.
The support of a permutation $\sigma\in S_n$ is $\operatorname{supp}(\sigma)=\{x\in\{0,\ldots,n-1\}\,:\,\sigma(x)\neq x\}$.

A \emph{permutation array} on $n$ elements is a non-empty subset of $S_n$.
The \emph{Hamming distance} $\hd(\sigma,\tau)$ between permutations $\sigma,\tau \in S_n$ is
    \[ \hd(\sigma,\tau) = \vert \lr{x \in \lr{0, \ldots, n-1} \,:\,\sigma(x)\neq \tau(x)}\vert. \]
The Hamming distance $\hd(P)$ of a permutation array $P$ is given by     \[ \hd(P) = \min \lr{\hd(\sigma,\tau) \,:\, \sigma,\tau \in P,\; \sigma \neq \tau}. \]
We denote by $M(n,d)$ the maximum size of a permutation array on $n$ symbols with Hamming distance $d$.

Permutation arrays with maximum possible size can be obtained from groups which act on sets in a particular way. A group $G$ acting on a set $X$ is said to be \emph{sharply $k$-transitive} (for some positive integer $k \leq \vert X\vert $) if it has a single orbit on the set of all ordered $k$-tuples of distinct elements of $X$, and if only the identity element fixes any such $k$-tuple. It was shown in \cite{Deza_Max_Number_Permutations_1977} that a group acting sharply $k$-transitively on a set of $n$ elements has Hamming distance $d = n-k+1$ and has the maximum possible size $M(n,d)$. The sharply $k$-transitive groups are all known: they exist only for $k \leq 5$, and the only examples are a few infinite families and some sporadics such as the Mathieu groups \cite{dixon_permutation_1996}.  
In particular, apart from a finite number of examples, the only sharply $2$-transitive groups are the one-dimensional affine groups $\AGL(1,q)$ in their natural action on the field $\GF(q)$ of order $q$. Hence, for any prime power $q$,
    \[ M(q,q-1) = q(q-1). \]
There are two infinite families of sharply $3$-transitive groups, all of which arise from finite fields: one is $\PGL(2,q)$ in its natural action on the points of the projective line $\PG(1,q) = \GF(q) \cup \{\infty\}$. For odd prime powers $q$ there is a second family of sharply $3$-transitive groups: 
a group also of order $q(q+1)(q-1)$ 
but is not permutation isomorphic to $\PGL(2,q)$ acting on $\PG(1,q)$ \cite{dixon_permutation_1996}.  
Thus, for any prime power $q$,
    \[ M(q+1,q-1) = q(q+1)(q-1). \]
    
The \emph{contraction} of $\sigma \in S_n$ is the permutation $\sigma^{\CT} \in S_{n-1}$ defined by
    \[ \sigma^{\CT}(x) = \begin{cases} 
            \sigma(n-1) &\text{if } x = \sigma^{-1}(n-1), \\
            \sigma(x) &\text{otherwise.}
            \end{cases} \]
If $\sigma$ is a cycle that fixes $n-1$, then $\sigma^{\CT} = \sigma$.
On the other hand, if $\sigma$ is the cycle $(a_1 \ a_2 \ \cdots \ a_{k-1} \ n-1)$ for some $a_1, a_2, \ldots, a_{k-1} \in \{0, \ldots, n-2\}$ 
then $\sigma^{\CT}$ is the cycle $(a_1 \ a_2 \ \cdots \ a_{k-1})$ in $S_{n-1}$ obtained by deleting $n-1$ from $\sigma$. 
More generally, if $\sigma$ has disjoint cycle decomposition $\sigma_1 \sigma_2 \cdots \sigma_k$ then $\sigma^{\CT} = \sigma_1^{\CT} \sigma_2^{\CT} \cdots \sigma_k^{\CT}$. 
Observe that there is nothing special about the symbol $n-1$ insofar as we may contract with respect to any of the symbols in $\{0,1,\ldots,n-1\}$ (cf.~\cite{Bereg2019}).
For the purposes of this paper, we shall always contract a permutation array $P\subseteq S_n$ with respect to the symbol $n-1$.

Lemma \ref{lem:inverse-CT} can be easily deduced from the definition above.

\begin{lemma} \label{lem:inverse-CT}
For any $\sigma \in S_n$, $\left(\sigma^{\CT}\right)^{-1} = \left(\sigma^{-1}\right)^{\CT}$. 
\end{lemma}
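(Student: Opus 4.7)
The plan is to verify the identity by direct computation from the definition of contraction, checking that $(\sigma^{\CT})^{-1}$ and $(\sigma^{-1})^{\CT}$ send every $y \in \{0,\ldots,n-2\}$ to the same element. Since both are permutations of $\{0,\ldots,n-2\}$, pointwise agreement suffices.

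First I would fix $y \in \{0, \ldots, n-2\}$ and compute $(\sigma^{\CT})^{-1}(y)$, i.e., find the unique $x$ with $\sigma^{\CT}(x) = y$, by splitting into two cases according to the piecewise definition of $\sigma^{\CT}$. In the case $y = \sigma(n-1)$, the element $x = \sigma^{-1}(n-1)$ satisfies $\sigma^{\CT}(x) = \sigma(n-1) = y$ by the first branch of the definition, so $(\sigma^{\CT})^{-1}(y) = \sigma^{-1}(n-1)$. In the remaining case $y \neq \sigma(n-1)$, no preimage comes from the first branch, so $\sigma^{\CT}(x) = \sigma(x) = y$ forces $x = \sigma^{-1}(y)$.

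Next I would evaluate $(\sigma^{-1})^{\CT}(y)$ using the definition of contraction applied to $\sigma^{-1}$, noting that $(\sigma^{-1})^{-1}(n-1) = \sigma(n-1)$. The first branch then says $(\sigma^{-1})^{\CT}(y) = \sigma^{-1}(n-1)$ exactly when $y = \sigma(n-1)$, and the second branch gives $(\sigma^{-1})^{\CT}(y) = \sigma^{-1}(y)$ otherwise. Matching case by case against the previous computation yields $(\sigma^{\CT})^{-1}(y) = (\sigma^{-1})^{\CT}(y)$ for all $y$, establishing the lemma.

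There is no real obstacle here: the argument is essentially a routine unpacking of the piecewise definition. The only mild subtlety is keeping straight that contracting $\sigma^{-1}$ removes the symbol $n-1$, whose preimage under $\sigma^{-1}$ is $\sigma(n-1)$, so the "exceptional" input of $(\sigma^{-1})^{\CT}$ is $\sigma(n-1)$ rather than $\sigma^{-1}(n-1)$. Once this is tracked correctly, the two cases line up exactly.
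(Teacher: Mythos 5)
Your argument is correct, but it takes a different route from the paper. The paper first reduces to the case where $\sigma$ is a single cycle not fixing $n-1$ (using the fact that both contraction and inversion act factor-wise on a disjoint cycle decomposition), writes $\sigma = (a_1 \ a_2 \ \cdots \ a_{k-1} \ n-1)$, and then simply reads off that $\left(\sigma^{\CT}\right)^{-1}$ and $\left(\sigma^{-1}\right)^{\CT}$ are both the cycle $(a_1 \ a_{k-1} \ \cdots \ a_2)$. You instead verify the identity pointwise directly from the piecewise functional definition of $\sigma^{\CT}$, splitting on whether $y = \sigma(n-1)$. The paper's version is shorter and leans on cycle notation; yours avoids the (mild) reduction step and handles the case where $\sigma$ fixes $n-1$ automatically, since then the exceptional branch is vacuous. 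Your key observation --- that the exceptional input of $\left(\sigma^{-1}\right)^{\CT}$ is $\sigma(n-1)$ because $(\sigma^{-1})^{-1}(n-1) = \sigma(n-1)$ --- is exactly the point that makes the two cases line up, and you track it correctly. Both proofs are complete and elementary.
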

\begin{proof}
It suffices to consider cycles $\sigma$ which does not fix $n-1$, say, $\sigma = (a_1 \ a_2 \ \cdots \ a_{k-1} \ n-1)$ for some $a_1, a_2, \ldots, a_{k-1} \in \{0,\ldots, n-2\}$.
We have
    \[ \left(\sigma^{\CT}\right)^{-1} = (a_1 \ a_{k-1} \ a_{k-2} \ \cdots \ a_2) = \left(\sigma^{-1}\right)^{\CT}.\qedhere\]
\end{proof}

For any $\sigma \in S_n$ and integer $m$ with $1 \leq m \leq n-1$,
    \[ \sigma^{\CT^m} := \left(\sigma^{\CT^{m-1}}\right)^{\CT} \]
and for any $P \subseteq S_n$,
    \[ P^{\CT^m} := \left\{\sigma^{\CT^m}\,:\,\sigma \in P \right\}. \]

Bereg et al.~gave a bound on the change in Hamming distance between two permutations after contraction, and in addition provided necessary and sufficient conditions for some cases.
These can be found in the observations in the discussion leading to the statement of \cite[Lemma 4]{Sudborough_Constructing_2018}, and in the statement itself.
We collect all of these in Lemma \ref{lemma:hd-bounds}.

\begin{lemma} \label{lemma:hd-bounds} \cite{Sudborough_Constructing_2018}
The following hold for any $\sigma,\tau\in S_n$.
\begin{enumerate}
    \item $\hd(\sigma,\tau)-\hd\left(\sigma^{\CT},\tau^{\CT}\right)\leq 3$
    \item The equation $\hd(\sigma,\tau) - \hd\left(\sigma^{\CT},\tau^{\CT}\right) = 3$ holds
    if and only if the cycle \[\rho = \big(n-1, \ \sigma(n-1), \ \tau(n-1)\big)\] is a factor in the disjoint cycle decomposition of $\sigma^{-1}\tau$. Furthermore, $\left(\sigma^{\CT}\right)^{-1}\tau^{\CT} = \sigma^{-1}\tau\rho^{-1}$.
    \item If $\hd(\sigma, \tau) - \hd(\sigma^{\CT} , \tau^{\CT} ) = 2$ then
        \begin{enumerate}
        \item[a.] $\sigma^{-1}\tau(n-1) \neq n-1$, and 
        \item[b.] if $\pi$ is the cycle containing $n-1$ in the disjoint cycle decomposition of $\sigma^{-1}\tau$, then the disjoint cycle decomposition of $(\sigma^{\CT} )^{-1}\tau^{\CT}$  is the same as that of $\sigma^{-1}\tau$ except that $\pi$ is replaced by the cycle obtained by removing from $\pi$ the symbols $n-1$ and one of $\pi(n-1)$ or $\pi^{-1}(n-1)$.
        \end{enumerate}
        \end{enumerate}

\end{lemma}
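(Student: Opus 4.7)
The plan is to prove the theorem by induction on $m$, using Lemma \ref{lemma:hd-bounds} together with the detailed characterizations of the effect of one contraction on cycle structure that are developed in Section 4 (for each possible value of the drop $\hd(\sigma,\tau)-\hd(\sigma^{\CT},\tau^{\CT}) \in \{0,1,2,3\}$).

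For the base case $m=1$, the hypothesis excludes all odd cycle factors of length $\ell \in \{3\}$, in particular it forbids the 3-cycle $\rho = (n-1,\sigma(n-1),\tau(n-1))$ from being a factor of $\sigma^{-1}\tau$. By Lemma \ref{lemma:hd-bounds}(2), this forces the drop to be at most $2$, so $\hd(\sigma^{\CT},\tau^{\CT}) \geq d-2$. For the inductive step with $m \geq 2$, write $\sigma' = \sigma^{\CT}$, $\tau' = \tau^{\CT}$, $\pi = \sigma^{-1}\tau$, and $\pi' = (\sigma')^{-1}\tau' = (\sigma^{\CT})^{-1}\tau^{\CT}$. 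Since the hypothesis still rules out a 3-cycle factor, the base-case argument gives $\hd(\sigma',\tau') \geq d-2$. To apply the induction hypothesis with parameter $m-1$ to the pair $(\sigma',\tau')$, I need to verify that $\pi'$ has no odd cycle factor of length in $\{3,5,\ldots,2m-1\}$; granting this, the IH yields $\hd\big((\sigma')^{\CT^{m-1}},(\tau')^{\CT^{m-1}}\big) \geq (d-2)-2(m-1) = d-2m$, and since $(\sigma')^{\CT^{m-1}} = \sigma^{\CT^m}$ (and similarly for $\tau$), this is the desired inequality.

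The main obstacle is verifying the cycle-structure claim for $\pi'$, and I would handle it by a case split on the drop, ruling out drop $=3$ and analyzing drops $0,1,2$ via the Section 4 characterizations. The key case is drop $=2$: by Lemma \ref{lemma:hd-bounds}(3), the cycle $\pi_0$ of $\pi$ containing $n-1$ is replaced in $\pi'$ by the cycle obtained from $\pi_0$ by deleting $n-1$ together with one of its $\pi_0$-neighbors, so its length drops by exactly $2$ and parity is preserved; if $\pi_0$ had odd length $k$, then $k \neq 1$ (since $n-1$ is moved) and the hypothesis forces $k \geq 2m+3$, whence the new cycle has odd length $k-2 \geq 2m+1$, safely outside $\{3,\ldots,2m-1\}$. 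For drops $0$ and $1$, I would invoke the corresponding structural descriptions from Section 4, which in each case present $\pi'$ as a localized modification of $\pi$ supported on the cycle of $\pi$ containing $n-1$ (possibly together with that containing a related symbol), and check that the modification cannot introduce an odd cycle in the forbidden range $\{3,\ldots,2m-1\}$—again because the cycle lengths involved are either even, equal to $1$, or at least $2m+3$ by hypothesis. Pinning down these drop $\in\{0,1\}$ modifications and verifying their parity/length behavior is the technical crux; once it is in hand, the induction closes cleanly.
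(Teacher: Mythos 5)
Your proposal does not prove the statement in question. The statement is Lemma \ref{lemma:hd-bounds}, a result about a \emph{single} contraction: the drop $\hd(\sigma,\tau)-\hd\big(\sigma^{\CT},\tau^{\CT}\big)$ is at most $3$, with an exact characterization of when it equals $3$ and structural consequences when it equals $2$. What you have written is instead an inductive argument for Theorem \ref{theorem:lowerbound} (the multiple-contraction bound $\hd\big(\sigma^{\CT^m},\tau^{\CT^m}\big)\geq d-2m$), and it \emph{uses} Lemma \ref{lemma:hd-bounds} as a black box in both the base case and the case analysis. So as a proof of the lemma it is circular/vacuous: nothing in the proposal establishes the bound of $3$, the if-and-only-if condition involving the cycle $\rho=(n-1,\ \sigma(n-1),\ \tau(n-1))$, or part (3). (In the paper this lemma carries no proof at all; it is quoted from Bereg et al.\ \cite{Sudborough_Constructing_2018}. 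A genuine proof would go through the definition of $\sigma^{\CT}$ and compare images at the symbols $n-1$, $\sigma^{-1}(n-1)$, $\tau^{-1}(n-1)$, in the spirit of Lemma \ref{lemma:delta} and Proposition \ref{prop:hd-decrease}.)

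Even judged as a proof of Theorem \ref{theorem:lowerbound}, the induction you sketch has a genuine gap, and it is precisely the one the paper warns about just before that theorem. Your inductive step requires that $(\sigma^{\CT})^{-1}\tau^{\CT}$ again have no odd cycle factor of length in $\{3,\ldots,2m-1\}$, but this can fail: by Proposition \ref{prop:decomposition}(2)(a), in the drop-$1$ case with $\omega=\pi$ the cycle $\omega$ is replaced by a product of two disjoint cycles $\omega'$, $\pi'$ with $\vert\omega'\vert+\vert\pi'\vert=\vert\omega\vert-1$. The hypothesis permits $\vert\omega\vert$ to be even, in which case one of $\omega'$, $\pi'$ has odd length, which can be as small as $3$. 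So the hypothesis is not inherited by the contracted pair and the forward induction does not close. This is why the paper's proof runs the argument backwards instead: it assumes the conclusion fails, and uses Lemma \ref{lemma:oddcycles} to trace short odd cycles of $\big(\sigma^{\CT^i}\big)^{-1}\tau^{\CT^i}$ back up to odd cycles of $\sigma^{-1}\tau$, with a counting argument (the sets $E_i$ and the quantities $a_i+2b_i+3c_i$) to bound their total length by $2m+3$.
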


The following theorem is a consequence of Lemma \ref{lemma:hd-bounds}.

\begin{thm} \cite[Theorem 3]{Sudborough_Constructing_2018} \label{theorem:hd-P}
Let $P \subseteq S_n$ be a permutation array with Hamming distance $d$. Then the following hold:
    \begin{enumerate}
    \item $\hd\Big(P^{\CT^2}\Big) \geq d-6$;
    \item if for any $\sigma, \tau \in P$ the disjoint cycle decomposition of $\sigma^{-1}\tau$ has no cycles of length $3$ or $5$, then $\hd\Big(P^{\CT^2}\Big) \geq d-4$.
    \end{enumerate}
\end{thm}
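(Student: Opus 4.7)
The plan is to proceed by induction on $m$. For the base case $m=1$, the assumption that $\sigma^{-1}\tau$ has no factor of odd length $3$ means, in particular, that the specific cycle $(n-1,\sigma(n-1),\tau(n-1))$ from Lemma~\ref{lemma:hd-bounds}(2) is not a factor of $\sigma^{-1}\tau$; hence, by parts~(1) and~(2) of that lemma, the Hamming distance decreases by at most $2=2m$.

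For the inductive step, simply invoking the statement for $m-1$ and then applying Lemma~\ref{lemma:hd-bounds}(1) to the final contraction yields only $\hd(\sigma^{\CT^m},\tau^{\CT^m})\ge d-2m-1$, one short of the target. This one-unit gap cannot be closed step by step, because the odd-cycle hypothesis is not inherited under contraction: for instance with $\sigma=(0\,2)$ and $\tau=(1\,3\,2)$ in $S_4$, $\sigma^{-1}\tau=(0\,1\,3\,2)$ is a $4$-cycle and so satisfies the $m=2$ hypothesis, yet $(\sigma^{\CT})^{-1}\tau^{\CT}=(0\,1\,2)$ is exactly the specific $3$-cycle for the second contraction, which therefore decreases Hamming distance by $3$. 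In that example the first contraction loses only $1$, so the total loss is still $2m=4$; any proof must account for this cross-step compensation.

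I would therefore replace the step-by-step bookkeeping by a global credit argument. Set $c_j:=2-\bigl(\hd(\sigma^{\CT^{j-1}},\tau^{\CT^{j-1}})-\hd(\sigma^{\CT^j},\tau^{\CT^j})\bigr)\in\{-1,0,1,2\}$, so that the theorem becomes $\sum_{j=1}^{m}c_j\ge 0$. Extending Lemma~\ref{lemma:hd-bounds}(3) by the same type of direct computation to cover every decrease value, one classifies the cycle-structure effect at each step: decrease $0$ removes a fixed point or relabels a cycle of the same length, decrease $1$ either merges two cycles into one of combined length minus $1$ or splits one cycle into two, decrease $2$ shortens a cycle by $2$ while spinning off a fixed point, and decrease $3$ removes the specific $3$-cycle. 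Each $c_j=-1$ step consumes a $3$-cycle of $(\sigma^{\CT^{j-1}})^{-1}\tau^{\CT^{j-1}}$, which, since $\sigma^{-1}\tau$ has none by hypothesis, must have been produced at an earlier step either (a) by a merger or split carrying credit $\ge 1$ — already paying for the later $-1$ — or (b) by the length-$2$ shortening of a $5$-cycle, a decrease-$2$ step of credit $0$ which propagates the cascade back to a $5$-cycle. Iterating, a cascade of purely decrease-$2$ shortenings reaching all the way back to $\sigma^{-1}\tau$ would require $\sigma^{-1}\tau$ to contain an odd cycle of length $3+2k\le 2m+1$, which is precisely what the hypothesis forbids; so every cascade must encounter a credit-generating step. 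The main obstacle is to turn this cascade into a well-defined injection from $\{j:c_j=-1\}$ into the earlier credit-generating steps, since mergers and splits make the ``ancestor'' cycle non-unique; a careful canonical choice of ancestor, together with the observation that a single contraction creates at most one new short odd cycle per unit of credit it generates, is needed to avoid double-counting.
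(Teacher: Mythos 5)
Your proposal is really a sketch of the paper's general Theorem \ref{theorem:lowerbound} (induction on $m$, hypothesis of no odd cycles of length $3\le\ell\le 2m+1$), not of the two-contraction statement asked. Your diagnosis of why step-by-step induction fails (the odd-cycle hypothesis is not inherited by $\big(\sigma^{\CT}\big)^{-1}\tau^{\CT}$) and your $S_4$ example are correct, and they are exactly what motivates the paper's Lemma \ref{lemma:oddcycles}. But the argument has a genuine gap precisely where you flag it: the well-defined assignment of each deficit step $c_j=-1$ to an earlier credit-generating step is the entire technical content, and you do not construct it. The paper runs your cascade backwards instead: Lemma \ref{lemma:oddcycles}, built on the case analysis of Proposition \ref{prop:decomposition}, pulls a set of odd cycles of the contracted quotient back to a set of odd cycles one level up, losing at most one cycle per decrease-$1$ step and gaining one per decrease-$3$ step, and the proof of Theorem \ref{theorem:lowerbound} then needs \emph{two} separate counts: $\vert E_0\vert\ge c-a>0$ (a surviving odd cycle exists in $\sigma^{-1}\tau$) and a total-length bound $\le 2m+3$ (that cycle is short enough to contradict the hypothesis). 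Your local per-step credit ledger does not obviously survive, e.g., a decrease-$1$ split of one odd cycle into two odd cycles both of which are later consumed by decrease-$3$ steps; the paper absorbs such configurations only through the global length bound, which your sketch gestures at but does not prove.

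For the statement actually in question, all of this machinery is overkill; the paper gives no proof beyond calling it ``a consequence of Lemma \ref{lemma:hd-bounds},'' and that short derivation is what you should supply. Part 1, which your proposal does not address at all, is Lemma \ref{lemma:hd-bounds}.1 applied twice. For part 2, let $d_1,d_2$ be the two decreases for a pair $\sigma,\tau\in P$. Since $\sigma^{-1}\tau$ has no $3$-cycle, Lemma \ref{lemma:hd-bounds}.2 forces $d_1\le 2$; hence $d_1+d_2\ge 5$ would require $d_1=2$ and $d_2=3$. Then $\big(\sigma^{\CT}\big)^{-1}\tau^{\CT}$ contains a $3$-cycle by Lemma \ref{lemma:hd-bounds}.2, while by Lemma \ref{lemma:hd-bounds}.3 its disjoint cycle decomposition differs from that of $\sigma^{-1}\tau$ only in that the cycle through $n-1$ loses two symbols; so that $3$-cycle is either a $3$-cycle of $\sigma^{-1}\tau$ or the truncation of a $5$-cycle of $\sigma^{-1}\tau$, both excluded by hypothesis. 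Either write out this $m=2$ argument, or commit to the general theorem and actually carry out the backward cycle-tracking with both the cardinality and the length bookkeeping.
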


Applying Theorem \ref{theorem:hd-P} to the permutation arrays $P^{\CT^2}$ for $P = \AGL(1,q)$, and to $P^{\CT}$ for $P = \PGL(2,q)$, yields the following corollary.

\begin{cor} \cite[Corollary 4, Corollary 5, and Theorem 4]{Sudborough_Constructing_2018}
Let $q$ be a prime power. 
    \begin{enumerate}
    \item If $q \not\equiv 1\imod{3}$ then
        \[ M(q-1, \, q-3) \geq q^2-1 \]
    and
        \[ M(q, \, q-3) \geq (q+1)q(q-1). \]
    \item If $q \equiv 2 \imod{3}$ and $q \not\equiv 0, 1 \imod{5}$ then
        \[ M(q-2, \, q-5) \geq q(q-1). \]
\end{enumerate} 
\end{cor}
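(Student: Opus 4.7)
The proof goes by induction on $m$, combining Lemma \ref{lemma:hd-bounds} with the complete single-contraction characterizations developed in Section~4. The base case $m=1$ is immediate: the hypothesis excludes $3$-cycles in $\sigma^{-1}\tau$, so the specific $3$-cycle $(n-1,\,\sigma(n-1),\,\tau(n-1))$ cannot be a factor of $\sigma^{-1}\tau$, and parts~(1) and~(2) of Lemma \ref{lemma:hd-bounds} together force the decrease to be at most $2$.

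For the inductive step, assume the statement holds for $m-1$. Since $\{3,5,\ldots,2m-1\}\subseteq \{3,5,\ldots,2m+1\}$, the hypothesis for $m$ implies the hypothesis for $m-1$, and the inductive hypothesis yields $\hd\big(\sigma^{\CT^{m-1}},\tau^{\CT^{m-1}}\big) \geq d-2(m-1)$. It then suffices to show that the final contraction decreases Hamming distance by at most $2$, which by Lemma \ref{lemma:hd-bounds}(2) amounts to verifying that the $3$-cycle $\big(n-m,\,\sigma^{\CT^{m-1}}(n-m),\,\tau^{\CT^{m-1}}(n-m)\big)$ is not a factor of $\big(\sigma^{\CT^{m-1}}\big)^{-1}\tau^{\CT^{m-1}}$. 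I would secure this by a side inductive claim on $k$: the product $\big(\sigma^{\CT^k}\big)^{-1}\tau^{\CT^k}$ has no odd cycle of length $\ell$ with $3 \leq \ell \leq 2(m-k)+1$, for each $k=0,1,\ldots,m-1$. The case $k=0$ is the theorem's hypothesis, and in the step $k\to k+1$ the invariant at stage $k$ forbids $3$-cycles, so the decrease is at most $2$ by Lemma \ref{lemma:hd-bounds}(1)--(2). When it equals $2$, Lemma \ref{lemma:hd-bounds}(3) says that the cycle containing $n-k-1$ is shortened by exactly $2$ while every other cycle is untouched; thus any odd cycle of length $\geq 2(m-k)+3$ becomes an odd cycle of length $\geq 2(m-k)+1 > 2(m-k-1)+1$, preserving the invariant.

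The delicate point is the decrease-$0$ and decrease-$1$ cases, for which Lemma \ref{lemma:hd-bounds} does not directly describe the induced change in cycle structure. Here I would invoke the necessary and sufficient conditions for each value of $\hd(\sigma,\tau) - \hd(\sigma^{\CT},\tau^{\CT})$ derived in Section~4, which pin down exactly how the cycle containing $n-k-1$ is altered; one expects that these milder modifications do not introduce any new odd cycle of length in $[3,\,2(m-k-1)+1]$. Verifying this case by case is the main obstacle; once it is done, the invariant holds at $k=m-1$, forbidding any $3$-cycle in $\big(\sigma^{\CT^{m-1}}\big)^{-1}\tau^{\CT^{m-1}}$, and the outer induction on $m$ closes.
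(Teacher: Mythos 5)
Your proposal proves the wrong statement. The statement under review is the corollary (quoted from \cite{Sudborough_Constructing_2018}) asserting $M(q-1,q-3)\geq q^2-1$, $M(q,q-3)\geq (q+1)q(q-1)$, and $M(q-2,q-5)\geq q(q-1)$; what you have written is an attempted proof of Theorem \ref{theorem:lowerbound}, a different result. A proof of the corollary must engage with the groups $\AGL(1,q)$ and $\PGL(2,q)$: sharp $2$- and $3$-transitivity give both arrays Hamming distance $q-1$ (on $q$ and $q+1$ symbols, respectively); the congruence $q\not\equiv 1\imod{3}$ rules out, for the pairs in $\PGL(2,q)$ at distance $q-1$, the $3$-cycle characterized in Lemma \ref{lemma:hd-bounds}.2, so a single contraction of $\PGL(2,q)$ loses at most $2$ and yields $M(q,q-3)\geq q(q+1)(q-1)$, with $M(q-1,q-3)\geq q^2-1$ following, e.g., from $M(n,d)\leq nM(n-1,d)$; and the congruences $q\equiv 2\imod{3}$, $q\not\equiv 0,1\imod{5}$ make $\vert\AGL(1,q)\vert=q(q-1)$ coprime to $3$ and $5$, so no quotient $\sigma^{-1}\tau$ has a $3$- or $5$-cycle and Theorem \ref{theorem:hd-P}.2 applied to $\AGL(1,q)^{\CT^2}$ gives $M(q-2,q-5)\geq q(q-1)$. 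None of these ingredients --- the groups, their orders, sharp transitivity, the congruence conditions, or $M(n,d)$ at all --- appears anywhere in your argument, so it cannot be repaired into a proof of the corollary; this is the paper's (and Bereg et al.'s) route, summarized in the sentence preceding the corollary.

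Separately, even read as a proof of Theorem \ref{theorem:lowerbound}, your argument has a genuine gap exactly at the point you flag as ``the main obstacle'': the invariant you hope to maintain is false. In the decrease-$1$ case with $\omega=\pi$ (Proposition \ref{prop:decomposition}.2(a), line 6 of Table \ref{table:omega-pi}), the cycle through $n-1$ splits into \emph{two} disjoint cycles whose lengths sum to $\vert\omega\vert-1$; since your hypothesis permits even cycles of every length, a $6$-cycle may split into a $3$-cycle and a $2$-cycle at a cost of only $1$ (merging is just as harmful: a $2$-cycle and a $4$-cycle can merge into a $5$-cycle). Thus short odd cycles can be created in mid-process, the invariant ``no odd cycle of length at most $2(m-k)+1$ after $k$ contractions'' is not preserved, and the forward induction cannot close. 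This is precisely why the paper argues backward instead: assuming the total decrease reaches $2m+3$, it uses Lemma \ref{lemma:oddcycles} to pull each tracked odd cycle back one contraction at a time, and a counting argument (the number of decrease-$1$ steps is strictly smaller than the number of decrease-$3$ steps, and the total length of the tracked odd cycles never exceeds $2m+3$) to exhibit an odd cycle of length between $3$ and $2m+3$ in $\sigma^{-1}\tau$ itself, contradicting the hypothesis.
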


\section[Injectivity of Contraction]{The order and Hamming distance of \texorpdfstring{$P^{\CT}$}{}}

Clearly $\vert P^{\CT}\vert \leq \vert P\vert$; we have $\vert P^{\CT}\vert < \vert P\vert$ exactly when there are two distinct permutations $\sigma$ and $\tau$ in $P$ with $\sigma^{\CT} = \tau^{\CT}$. Theorem \ref{theorem:order} gives an easy sufficient condition in order for the $m$th contraction of $P$ to have the same order as $P$.

\begin{thm} \label{theorem:order}
Let $P \subseteq S_n$ be a permutation array with Hamming distance $ d$. Then $\vert P^{\CT^m}\vert = \vert P\vert$ for any positive integer $m < d/3 $.
\end{thm}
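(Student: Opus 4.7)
The plan is a short contrapositive argument built directly on Lemma \ref{lemma:hd-bounds}(1). Suppose, toward a contradiction, that $\vert P^{\CT^m} \vert < \vert P \vert$. Then there exist distinct $\sigma, \tau \in P$ with $\sigma^{\CT^m} = \tau^{\CT^m}$, so that $\hd\big(\sigma^{\CT^m}, \tau^{\CT^m}\big) = 0$.

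Next I would iterate the single-contraction bound. By Lemma \ref{lemma:hd-bounds}(1), for each $i \in \{1, \ldots, m\}$ we have
\[
\hd\big(\sigma^{\CT^{i-1}}, \tau^{\CT^{i-1}}\big) - \hd\big(\sigma^{\CT^i}, \tau^{\CT^i}\big) \leq 3,
\]
since $\sigma^{\CT^i} = (\sigma^{\CT^{i-1}})^{\CT}$ and likewise for $\tau$. Telescoping these $m$ inequalities (with $\sigma^{\CT^0} = \sigma$ and $\tau^{\CT^0} = \tau$) gives
\[
\hd(\sigma, \tau) - \hd\big(\sigma^{\CT^m}, \tau^{\CT^m}\big) \leq 3m.
\]
Combined with $\hd\big(\sigma^{\CT^m}, \tau^{\CT^m}\big) = 0$, this yields $\hd(\sigma, \tau) \leq 3m$.

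Finally, since $\sigma, \tau$ are distinct elements of $P$, the definition of the Hamming distance of a permutation array forces $\hd(\sigma, \tau) \geq d$. Hence $d \leq 3m$, contradicting the hypothesis $m < d/3$. Therefore contraction, iterated $m$ times, is injective on $P$, and $\vert P^{\CT^m} \vert = \vert P \vert$. There is no real obstacle here; the argument is just a clean application of the already-established per-contraction bound, and the only thing to be careful about is phrasing the iteration so that the telescoping is unambiguous.
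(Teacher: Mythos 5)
Your argument is correct and is essentially the paper's own proof: both iterate Lemma \ref{lemma:hd-bounds}(1) to get $\hd\big(\sigma^{\CT^m},\tau^{\CT^m}\big) \geq \hd(\sigma,\tau) - 3m \geq d - 3m > 0$ for distinct $\sigma,\tau \in P$, which forces $\sigma^{\CT^m} \neq \tau^{\CT^m}$. The only difference is that you phrase it as a contradiction while the paper argues directly; the content is identical.
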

\begin{proof}
By Lemma \ref{lemma:hd-bounds}.1, $\hd\big(\sigma^{\CT}, \tau^{\CT}\big) \geq \hd(\sigma, \tau) - 3$ for any $\sigma, \tau \in P$. As such, $\hd(\sigma^{\CT^m}, \tau^{\CT^m}) \geq \hd(\sigma,\tau) - 3m \geq d - 3m > 0$ for any integer $m < d/3$. Hence $\vert P^{\CT^m}\vert = \vert P\vert$ for all such $m$, as required.
\end{proof}

Next we consider the effect of the contraction operation on the Hamming distance of a permutation array. The Hamming distance may increase or decrease, as illustrated in the next example.

\begin{ex}
Let $P_1 = \lbrace (0), (1 \ 4), (0 \ 1 \ 2), (1 \ 2 \ 3) \rbrace \subseteq S_5$. We obtain $P_1^{\CT} = \lbrace (0), (0 \ 1 \ 2), (1 \ 2 \ 3) \rbrace \subseteq S_4$, and
$\hd\big(P_1^{\CT}\big) = 3>2=\hd(P_1)$.

Now consider $P_2 = \lbrace (0 \ 1 \ 2), (1 \ 3 \ 2) \rbrace \subseteq S_4$. Then $P_2^{\CT} = \{ (0 \ 1 \ 2), (1 \ 2) \}\subseteq S_3$. Notice in this case that $\hd\big(P_2^{\CT}\big) = 2<3=\hd(P_2)$.
\end{ex}

Theorem \ref{theorem:hd-increase} gives a necessary and sufficient condition for the Hamming distance of a permutation array to increase after contraction.

\begin{thm} \label{theorem:hd-increase}
Let $P$ be a permutation array with $\hd(P) = d$ and $\vert P^{\CT}\vert \geq 2$. Then $\hd\big(P^{\CT}\big) > \hd(P)$ if and only if the following conditions hold:
    \begin{enumerate}
    \item \label{condition2} any $\sigma, \tau \in P$ with $\hd(\sigma,\tau) = d$ satisfy $\sigma^{\CT} = \tau^{\CT}$; and   
    \item \label{condition3} any $\sigma,\tau \in P$ with $\hd(\sigma, \tau) > d$ satisfy $\hd\big(\sigma^{\CT}, \tau^{\CT}\big) > d$ or $\sigma^{\CT} = \tau^{\CT}$.
    \end{enumerate}
\end{thm}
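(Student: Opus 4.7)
The plan is to split the equivalence into its two directions, with the crucial ingredient being the auxiliary fact that Hamming distance between two individual permutations never strictly increases under contraction, i.e., $\hd\bigl(\sigma^{\CT},\tau^{\CT}\bigr) \leq \hd(\sigma,\tau)$ for all $\sigma,\tau \in S_n$. I would establish this up front by introducing the disagreement sets $A = \{y \in \{0,\ldots,n-1\} : \sigma(y) \neq \tau(y)\}$ and $B = \{x \in \{0,\ldots,n-2\} : \sigma^{\CT}(x) \neq \tau^{\CT}(x)\}$, and constructing an injection $\phi : B \to A$ defined by $\phi(x) = x$ unless $x = \sigma^{-1}(n-1) = \tau^{-1}(n-1)$, in which case $\phi(x) = n-1$. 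A short case analysis from the definition of contraction verifies $\phi(x) \in A$ in each case (in the exceptional case, $x \in B$ forces $\sigma(n-1) \neq \tau(n-1)$, so $n-1 \in A$), and injectivity is automatic since the exceptional case accounts for at most one $x$ and is the only one mapped to $n-1$.

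For the backward direction, I would assume conditions (1) and (2) and let $\alpha,\beta$ be any two distinct elements of $P^{\CT}$, say $\alpha = \sigma^{\CT}$ and $\beta = \tau^{\CT}$ for some $\sigma,\tau \in P$; then $\sigma \neq \tau$ and so $\hd(\sigma,\tau) \geq d$. If $\hd(\sigma,\tau) = d$ then (1) forces $\sigma^{\CT} = \tau^{\CT}$, contradicting $\alpha \neq \beta$; hence $\hd(\sigma,\tau) > d$, and then (2) combined with $\sigma^{\CT} \neq \tau^{\CT}$ yields $\hd(\alpha,\beta) > d$. Since $\alpha,\beta$ were arbitrary, $\hd\bigl(P^{\CT}\bigr) > d$.

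For the forward direction, I would assume $\hd\bigl(P^{\CT}\bigr) > d$. Given $\sigma,\tau \in P$ with $\hd(\sigma,\tau) = d$, if $\sigma^{\CT} \neq \tau^{\CT}$ then they are distinct in $P^{\CT}$, and the auxiliary fact gives $d = \hd(\sigma,\tau) \geq \hd\bigl(\sigma^{\CT},\tau^{\CT}\bigr) \geq \hd\bigl(P^{\CT}\bigr) > d$, which is absurd, so (1) holds. Given $\sigma,\tau \in P$ with $\hd(\sigma,\tau) > d$, either $\sigma^{\CT} = \tau^{\CT}$ or $\sigma^{\CT}$ and $\tau^{\CT}$ are distinct in $P^{\CT}$ and $\hd\bigl(\sigma^{\CT},\tau^{\CT}\bigr) \geq \hd\bigl(P^{\CT}\bigr) > d$, establishing (2). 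The substantive work lies entirely in the auxiliary fact; once it is in place, both directions reduce to carefully unpacking the definition of $\hd\bigl(P^{\CT}\bigr)$.
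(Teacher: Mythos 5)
Your proof is correct and follows essentially the same route as the paper: both directions reduce to the monotonicity fact $\hd\bigl(\sigma^{\CT},\tau^{\CT}\bigr)\leq\hd(\sigma,\tau)$ together with unpacking the definition of $\hd\bigl(P^{\CT}\bigr)$, exactly as in the paper's argument (which handles the forward direction by contrapositive, a cosmetic difference). The only addition is that you prove the monotonicity fact from scratch via an explicit injection of disagreement sets, where the paper simply imports it from the cited lemma of Bereg et al.; your injection argument is valid.
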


\begin{proof}
Suppose conditions \eqref{condition2} and \eqref{condition3} hold. Let $\pi$ and $\rho$ be distinct permutations in $P^{\CT}$, and let $\sigma, \tau \in P$ such that $\sigma^{\CT} = \pi$ and $\tau^{\CT} = \rho$. Since $\hd(P) = d$, we then have $\hd(\sigma,\tau) \geq d$. 
However, $\hd(\sigma,\tau) \neq d$, because otherwise, $\pi=\rho$ by \eqref{condition2}.
Thus $\hd(\sigma,\tau) > d$ and we obtain from \eqref{condition3} that $\hd(\pi,\rho) > d$ by \eqref{condition3} .
Since $\pi$ and $\rho$ are arbitrary it follows that $\hd\big(P^{\CT}\big) > d$.

For the other direction, assume first that \eqref{condition2} does not hold.
Then there are permutations $\sigma,\tau \in P$ with $\hd(\sigma,\tau) = d$ and $\sigma^{\CT} \neq \tau^{\CT}$. 
This implies that
    \[ \hd\big(P^{\CT}\big) \leq \hd\big(\sigma^{\CT},\tau^{\CT}\big) \leq \hd(\sigma,\tau) = d = \hd(P).\]
On the other hand, if \eqref{condition3} does not hold, then there are permutations $\sigma,\tau \in P$ with $\hd(\sigma,\tau) > d$
such that $\sigma^{\CT}\neq \tau^{\CT}$ and $\hd(\sigma^{\CT},\tau^{\CT}) \leq d$. Again we have
    \[ \hd\big(P^{\CT}\big) \leq \hd\big(\sigma^{\CT},\tau^{\CT}\big) \leq d = \hd(P),\]
which completes the proof.
\end{proof}

It follows from Theorem \ref{theorem:hd-increase} that $\hd\big(P^{\CT}\big) \leq \hd(P)$ whenever $\vert P^{\CT} \vert = \vert P \vert$, since in this case $\sigma^{\CT} \neq \tau^{\CT}$ for any distinct $\sigma, \tau \in P$, and in particular for those with $\hd(\sigma,\tau) = d$. This observation together with Theorem \ref{theorem:order} yields the following result.

\begin{cor}
If $\hd(P) \geq 4$ then $\hd\big(P^{\CT}\big) \leq \hd(P)$.
\end{cor}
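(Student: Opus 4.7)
The plan is to combine Theorem \ref{theorem:order} with the contrapositive of Theorem \ref{theorem:hd-increase}. Set $d = \hd(P)$ and assume $d \geq 4$, so that $1 < d/3$. Applying Theorem \ref{theorem:order} with $m = 1$ then gives $\vert P^{\CT} \vert = \vert P \vert$; in particular $\sigma^{\CT} \neq \tau^{\CT}$ for every pair of distinct permutations $\sigma, \tau \in P$.

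Next I would observe that this equality of sizes forces condition \eqref{condition2} of Theorem \ref{theorem:hd-increase} to fail: any two distinct $\sigma, \tau \in P$ attaining $\hd(\sigma,\tau) = d$ (such a pair exists, since $d$ is realized as a minimum) satisfy $\sigma^{\CT} \neq \tau^{\CT}$. By the biconditional in Theorem \ref{theorem:hd-increase}, the conclusion $\hd\bigl(P^{\CT}\bigr) > d$ cannot hold, so $\hd\bigl(P^{\CT}\bigr) \leq d = \hd(P)$.

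The argument is essentially a one-liner once the two previous results are in hand, so there is no real obstacle; the only point worth being careful about is verifying that $\vert P \vert \geq 2$ (implicit in the notation $\hd(P)$) propagates to $\vert P^{\CT} \vert \geq 2$, which is exactly what Theorem \ref{theorem:order} provides and which is needed to invoke Theorem \ref{theorem:hd-increase}.
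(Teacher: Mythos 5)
Your proposal is correct and follows the same route as the paper: apply Theorem \ref{theorem:order} with $m=1$ to get $\vert P^{\CT}\vert = \vert P\vert$, note that this makes condition \eqref{condition2} of Theorem \ref{theorem:hd-increase} fail for a distance-minimizing pair, and conclude via the biconditional. The remark about $\vert P^{\CT}\vert \geq 2$ is a nice touch but otherwise the argument is identical to the paper's.
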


\begin{proof}
If $d := \hd(P) \geq 4$ then $d/3> 1$,
so that $\vert P^{\CT}\big\vert = \vert P \vert$ by Theorem \ref{theorem:order}. Thus, by the comments above, $\hd\big(P^{\CT}\big) \leq \hd(P)$.
\end{proof}

\section{Hamming distance of two permutations after contraction}

In this section we investigate the effect of the contraction operation on the Hamming distance of two permutations. The main results in this section are Propositions \ref{prop:hd-decrease} and \ref{prop:decomposition}, both of which will be used in the proof of the main result in Section 5. Proposition \ref{prop:hd-decrease} gives necessary and sufficient conditions for each possible change in Hamming distance, while Proposition \ref{prop:decomposition} describes the disjoint cycle decomposition of $\sigma^{-1}\tau$ for permutations $\sigma$ and $\tau$ which contract to different permutations.

Let $\sigma \in S_n$ be a cycle which does not fix $n-1$. Recall from the definition of the contraction operation that for any $x \in \{0, 1, \ldots, n-2\}$,
    \[ \sigma^{\CT}(x) = \left\{
        \begin{aligned}
        &\sigma(x) &&\text{if } x \neq \sigma^{-1}(n-1), \\
        &\sigma(n-1) &&\text{if } x = \sigma^{-1}(n-1).
        \end{aligned} \right. \]
Hence $\sigma^{\CT}(x) = \sigma(x)$ if and only if $x \neq \sigma^{-1}(n-1)$. It follows that to determine the Hamming distance after contraction of two permutations $\sigma$ and $\tau$ in $S_n$, we only need to examine the images of $n-1$, $\sigma^{-1}(n-1)$, and $\tau^{-1}(n-1)$.
Let
    \begin{equation} \label{eq:delta}
    \delta(\sigma,\tau) = \big\vert\big\{ x \in \big\{n-1, \, \sigma^{-1}(n-1), \, \tau^{-1}(n-1)\big\} \,:\,\sigma(x) \neq \tau(x) \big\}\big\vert
    \end{equation}
and
    \begin{equation} \label{eq:delta-CT}
    \delta^{\CT}(\sigma,\tau) = \big\vert\big\{ x \in \big\{\sigma^{-1}(n-1), \, \tau^{-1}(n-1)\big\} \,:\,\sigma^{\CT}(x) \neq \tau^{\CT}(x) \big\}\big\vert.
    \end{equation}

\begin{lemma} \label{lemma:delta}
Let $\sigma, \tau \in S_n$, and let $\delta(\sigma,\tau)$ and $\delta^{\CT}(\sigma,\tau)$ be as in \eqref{eq:delta} and \eqref{eq:delta-CT}, respectively. Then
    \[ \hd(\sigma,\tau) - \hd\big(\sigma^{\CT},\tau^{\CT}\big) = \delta(\sigma,\tau) - \delta^{\CT}(\sigma,\tau). \]
\end{lemma}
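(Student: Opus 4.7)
The plan is to split the index set $\{0,1,\ldots,n-1\}$ according to how contraction alters the images of $\sigma$ and $\tau$. Let $A := \{n-1,\sigma^{-1}(n-1),\tau^{-1}(n-1)\}$ and $B := \{0,1,\ldots,n-2\} \setminus A$. The crucial observation, already recorded in the paragraph preceding the lemma, is that for $x \in \{0,\ldots,n-2\}$ one has $\sigma^{\CT}(x)=\sigma(x)$ precisely when $x \neq \sigma^{-1}(n-1)$, and analogously for $\tau$. Thus on $B$ both $\sigma$ and $\tau$ are unchanged by contraction, so the set of $x \in B$ with $\sigma(x) \neq \tau(x)$ coincides with the set of $x \in B$ with $\sigma^{\CT}(x) \neq \tau^{\CT}(x)$; I denote the common cardinality by $N$.

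The next step is to decompose both Hamming distances through the partition $\{0,\ldots,n-1\} = B \sqcup A$, yielding $\hd(\sigma,\tau) = N + \delta(\sigma,\tau)$ directly from the definition of $\delta(\sigma,\tau)$. Likewise, $\{0,\ldots,n-2\} = B \sqcup (A \cap \{0,\ldots,n-2\})$ gives $\hd(\sigma^{\CT},\tau^{\CT}) = N + |\{x \in A \cap \{0,\ldots,n-2\} : \sigma^{\CT}(x) \neq \tau^{\CT}(x)\}|$, and I would argue that this second summand equals $\delta^{\CT}(\sigma,\tau)$. Subtracting the two identities then yields $\hd(\sigma,\tau) - \hd(\sigma^{\CT},\tau^{\CT}) = \delta(\sigma,\tau) - \delta^{\CT}(\sigma,\tau)$, as required.

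The main obstacle, though minor, is the bookkeeping when $A$ has fewer than three elements, i.e., when some of $n-1$, $\sigma^{-1}(n-1)$, and $\tau^{-1}(n-1)$ coincide. Since $\delta$ and $\delta^{\CT}$ are both defined as cardinalities of sets rather than sums over ordered triples, these coincidences are absorbed automatically. The case that warrants an explicit check is when $\sigma$ or $\tau$ fixes $n-1$: if, say, $\sigma^{-1}(n-1) = n-1$, then $\sigma^{\CT}$ agrees with $\sigma$ on every element of $\{0,\ldots,n-2\}$, so the index $n-1$ is harmlessly dropped when passing from $A$ to $A \cap \{0,\ldots,n-2\}$, and the resulting count still matches $\delta^{\CT}(\sigma,\tau)$ as given in \eqref{eq:delta-CT}.
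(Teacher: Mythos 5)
Your proof is correct and follows essentially the same route as the paper: you split $\{0,\ldots,n-1\}$ into $A=\{n-1,\sigma^{-1}(n-1),\tau^{-1}(n-1)\}$ and its complement, note that contraction leaves both permutations unchanged off $A$, and subtract the two resulting decompositions (your $N$ is the paper's $\gamma$). Your extra remarks on the degenerate cases where elements of $A$ coincide or where $\sigma$ fixes $n-1$ are accurate and, if anything, slightly more careful than the paper's one-line treatment.
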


\begin{proof}
Let $\gamma = \big\vert \big\{ x \notin \big\{n-1, \, \sigma^{-1}(n-1), \, \tau^{-1}(n-1)\big\}\,:\,\sigma(x) \neq \tau(x) \big\}\big\vert $. Then $\hd(\sigma,\tau) = \gamma + \delta(\sigma,\tau)$. Since we also have $\sigma^{\CT}(x) = \sigma(x)$ and $\tau^{\CT}(x) = \tau(x)$ for any $x \notin \big\{n-1, \, \sigma^{-1}(n-1), \, \tau^{-1}(n-1)\big\}$, we have $\hd\big(\sigma^{\CT},\tau^{\CT}\big) = \gamma + \delta^{\CT}(\sigma,\tau)$. The result follows immediately.
\end{proof}
  
 \medskip 
  
Recall from Lemma \ref{lemma:hd-bounds} that for any distinct permutations $\sigma$ and $\tau$,
    \[ \hd(\sigma,\tau) - 3 \leq \hd\big(\sigma^{\CT},\tau^{\CT}\big) \leq \hd(\sigma,\tau). \] The following proposition describes when each possible decrease in Hamming distance occurs.

\begin{prop} \label{prop:hd-decrease}
Let $\sigma, \tau \in S_n$, and let $i = \sigma^{-1}(n-1)$, $j = \tau^{-1}(n-1)$, $a = \tau(i)$, $b = \sigma(j)$, $c = \sigma(n-1)$, and $d = \tau(n-1)$. Let $\delta(\sigma,\tau)$ and $\delta^{\CT}(\sigma,\tau)$ be as in \eqref{eq:delta} and \eqref{eq:delta-CT}, respectively. Then $\hd\big(\sigma,\tau\big)-\hd\big(\sigma^{\CT},\tau^{\CT}\big) =  \Delta\hd\big(\sigma,\tau\big)$ 
holds as given in Table \ref{table:hd-conditions}.
\end{prop}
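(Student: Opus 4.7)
The plan is to apply Lemma~\ref{lemma:delta}, which reduces the identity to showing $\delta(\sigma,\tau) - \delta^{\CT}(\sigma,\tau) = \Delta\hd(\sigma,\tau)$ in each row of Table~\ref{table:hd-conditions}. Since $\delta$ and $\delta^{\CT}$ depend only on the values of $\sigma,\tau,\sigma^{\CT},\tau^{\CT}$ at the three points $n-1,i,j$, the verification becomes a finite case check driven by which of those three indices coincide and which of the symbols $a,b,c,d$ coincide. The key bookkeeping identity, immediate from the definition of contraction, is $\sigma^{\CT}(i)=c$, $\tau^{\CT}(j)=d$, and $\sigma^{\CT}$ (resp.~$\tau^{\CT}$) agrees with $\sigma$ (resp.~$\tau$) off $\{i\}$ (resp.~$\{j\}$); in particular, when $i\neq j$ the comparisons defining $\delta^{\CT}$ reduce to ``$c$ versus $a$'' at $i$ and ``$b$ versus $d$'' at $j$.

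The primary case split I would use is on the partition type of $\{n-1,i,j\}$: (i) $i=j=n-1$; (ii) exactly one of $i,j$ equals $n-1$; (iii) $i=j\neq n-1$; and (iv) $n-1,i,j$ pairwise distinct. Case (i) is immediate since $\sigma^{\CT}=\sigma$ and $\tau^{\CT}=\tau$, so $\Delta\hd=0$. The two subcases of (ii) are symmetric, so I would only treat $i=n-1$ and $j\neq n-1$; here $\sigma^{\CT}=\sigma$, so $\delta^{\CT}$ collapses to a single comparison at $j$, and $\delta$ to two comparisons at $n-1$ and $j$. Case (iii) forces $a=b=n-1$ via the identities $\sigma(i)=\tau(i)=n-1$, so the only remaining freedom is whether $c=d$, and a short computation yields $\Delta\hd=0$ regardless (any contribution at $n-1$ in $\delta$ is exactly transferred to $i$ in $\delta^{\CT}$). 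Case (iv), the generic one, uses injectivity to force $a,b\neq n-1$, which makes the contributions to $\delta$ at $i$ and at $j$ automatic; thus $\delta\in\{2,3\}$ according as $c=d$ or not, while $\delta^{\CT}\in\{0,1,2\}$ counts the inequalities $c\neq a$ and $b\neq d$.

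Within each of cases (ii) and (iv) I would then subdivide further according to which of the equalities $a=c$, $b=d$, $a=d$, $b=c$, $c=d$ hold, matching precisely the conditions labelling the rows of Table~\ref{table:hd-conditions}, and read off $\delta-\delta^{\CT}$ directly. As a sanity check I would compare against Lemma~\ref{lemma:hd-bounds}: the bound $\Delta\hd\le 3$ with equality iff $(n-1,\,\sigma(n-1),\,\tau(n-1))$ is a $3$-cycle of $\sigma^{-1}\tau$ corresponds to the subcase of (iv) with $c\neq d$, $a=c$, and $b=d$, and the necessary conditions in part (3) of that lemma for $\Delta\hd=2$ should single out several further subcases of (iv). The main obstacle is purely bookkeeping: ensuring the case split is exhaustive, that forced equalities in the degenerate cases (for example, $a=b=n-1$ in case (iii), or $c,d\neq n-1$ once $i,j\neq n-1$) are used to eliminate vacuous rows, and that the tabulated value of $\Delta\hd$ in each row agrees with the computed difference $\delta-\delta^{\CT}$.
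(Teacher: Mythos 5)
Your proposal is correct and follows essentially the same route as the paper's proof: reduce via Lemma~\ref{lemma:delta} to computing $\delta(\sigma,\tau)-\delta^{\CT}(\sigma,\tau)$, split into the same four cases according to which of $n-1$, $i$, $j$ coincide, and then enumerate the possible coincidences among $a,b,c,d$ (using injectivity to rule out $a=d$, $b=c$, etc.\ in the generic case). The only difference is presentational: the paper tabulates the images of $i,j,n-1$ under $\sigma,\tau,\sigma^{\CT},\tau^{\CT}$ and reads off $\delta,\delta^{\CT}$ from those tables, which is exactly your ``$c$ versus $a$ at $i$, $b$ versus $d$ at $j$'' bookkeeping.
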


\begin{table}[H]
\begin{center}
\begin{tabular}{rc|l}
\hline  
& $\Delta \hd(\sigma,\tau)$ & Conditions \\
\hline \hline
\textsf{\small 1} & $0$ & $i$, $j$, $n-1$ pairwise distinct; $a \neq b$ and $c = d$ \\
\cline{3-3}
\textsf{\small 2} &  & $i$, $j$, $n-1$ pairwise distinct; $a = b$ and $c = d$ \\
\cline{3-3}
\textsf{\small 3} &  & $i = j \neq n-1$, $a = b = n-1$, $c \neq d$ \\ 
\cline{3-3}
\textsf{\small 4} &  & $i = j \neq n-1$; $a = b = n-1$, $c = d$ \\
\cline{3-3}
\textsf{\small 5} &  & $i = j = n-1$; $a = b = c = d = n-1$ \\
\hline
\textsf{\small 6} & $1$ & $i$, $j$, $n-1$ pairwise distinct; $a$, $b$, $c$, $d$ pairwise distinct \\
\cline{3-3}
\textsf{\small 7} &  & $i$, $j$, $n-1$ pairwise distinct; $a = b$, $c \neq d$ \\
\cline{3-3}
\textsf{\small 8} &  & $i = n-1 \neq j$; $a = d$, $b$, $c = n-1$ pairwise distinct \\
\cline{3-3}  
\textsf{\small 9} &  & $j = n-1 \neq i$; $a$, $b = c$, $d = n-1$ pairwise distinct \\
\hline
\textsf{\small 10} & $2$ & $i$, $j$, $n-1$ pairwise distinct; $a = c$, $b \neq d$ \\
\cline{3-3} 
\textsf{\small 11} &  &  $i$, $j$, $n-1$ pairwise distinct; $a \neq c$, $b = d$ \\
\cline{3-3}    
\textsf{\small 12} &  & $i = n-1 \neq j$; $a = b = d \neq c = n-1$ \\
\cline{3-3}
\textsf{\small 13} &  & $j = n-1 \neq i$; $a = b = c \neq d = n-1$ \\
\hline
\textsf{\small 14} & $3$ & $i$, $j$, $n-1$ pairwise distinct; $a = c$, $b = d$ \\
\hline
\end{tabular}
\caption{Necessary and sufficient conditions for each possible value of $\Delta\hd(\sigma,\tau) = \hd(\sigma,\tau) - \hd\big(\sigma^{\CT},\tau^{\CT}\big)$}
\label{table:hd-conditions}
\end{center}
\end{table}

\begin{proof}
The images of $i$, $j$, and $n-1$ under $\sigma$, $\tau$, $\sigma^{\CT}$, and $\tau^{\CT}$ are given in the following tables:
    \begin{center}
    \begin{minipage}[t]{0.4\textwidth}
    \begin{tabular}{l|lll}
    \hline
    $x$ & $i$ & $j$ & $n-1$ \\
    \hline\hline
    $\sigma(x)$ & $n-1$ & $b$ & $c$ \\
    $\tau(x)$ & $a$ & $n-1$ & $d$ \\
    \hline
    \end{tabular}
    \end{minipage} \hspace{1cm}
    \begin{minipage}[t]{0.25\textwidth}
    \begin{tabular}{l|ll}
    \hline
    $x$ & $i$ & $j$ \\
    \hline\hline
    $\sigma^{\CT}(x)$ & $c$ & $b$ \\
    $\tau^{\CT}(x)$ & $a$ & $d$ \\
    \hline
    \end{tabular}
    \end{minipage}
    \end{center}
The values of $\delta(\sigma,\tau)$ and $\delta^{\CT}(\sigma,\tau)$ are easily obtained from the tables by counting the number of columns having different entries in each row. We divide into four cases.

\emph{Case 1.} Assume that $i$, $j$, and $n - 1$ are pairwise distinct. Since $\sigma$ and $\tau$ are bijections, it follows that $a \neq d$, $b \neq c$, and $a$, $b$, $c$, and $d$ are each distinct from $n - 1$. Furthermore, either $a$, $b$, $c$, and $d$ are
pairwise distinct, or at least one of the following holds: $a = b$, $a = c$, $b = d$, $c = d$. The only possible relationships among $a$, $b$, $c$, and $d$, together with the corresponding values of $\delta(\sigma,\tau)$ and $\delta^{\CT}(\sigma,\tau)$, are given in Table~\ref{tab:imp2}. These correspond to the entries in lines 1, 2, 6, 7, 10, 11, and 14 of Table \ref{table:hd-conditions}.

\begin{table}[ht]
\begin{center}
\begin{tabular}{lcc}
\hline
Conditions on $a,b,c,d$ & $\delta(\sigma,\tau)$ & $\delta^{\CT}(\sigma,\tau)$ \\
\hline \hline
$a$, $b$, $c$, $d$ pairwise distinct & $3$ & $2$ \\
$a = b$, $c$, $d$ pairwise distinct & $3$ & $2$ \\
$a = b \neq c = d$ & $2$ & $2$ \\
$a = c$, $b$, $d$ pairwise distinct & $3$ & $1$ \\          
$a = c \neq b = d$ & $3$ & $0$ \\
$a$, $b = d$, $c$ pairwise distinct & $3$ & $1$ \\
$a$, $b$, $c = d$ pairwise distinct & $2$ & $2$ \\
\hline
\end{tabular}
\caption {Values of $\delta$ and $\delta^{\CT}$ for Case 1 of the proof of Proposition \ref{prop:hd-decrease}}
\label{tab:imp2}
\end{center}
\end{table}

\emph{Case 2.} Assume that either $i = n-1 \neq j$ or $j = n-1 \neq i$. If $i = n-1 \neq j$ then $a = d \neq n-1$ and $b \neq c = n-1$. Similarly, if $j = n-1 \neq i$ then $b = c \neq n-1$ and $a\neq d = n-1$. The only possible relationships among $a$, $b$, $c$, and $d$, together with the corresponding values of $\delta(\sigma,\tau)$ and $\delta^{\CT}(\sigma,\tau)$, are given in Table \ref{tab:imp3}. These correspond to lines 8, 9, 12, and 13 of Table \ref{table:hd-conditions}.

\begin{table}[ht]
\begin{center}
\begin{tabular}{lcc}
\hline
Conditions on $a$, $b$, $c$, $d$ & $\delta(\sigma,\tau)$ & $\delta^{\CT}(\sigma,\tau)$  \\
\hline \hline  
$a = d$, $b$, $c = n-1$ pairwise distinct & $2$ & $1$ \\
$a = b = d \neq c = n-1$ & $2$ & $0$ \\
$a$, $b = c$, $d =  n-1$ pairwise distinct & $2$ & $1$ \\
$a = b = c \neq d = n-1$ & $2$ & $0$\\
\hline
\end{tabular}
\caption{Values of $\delta$ and $\delta^{\CT}$ for Case 2 of the proof of Proposition \ref{prop:hd-decrease}}
\label{tab:imp3}
\end{center}
\end{table}

\emph{Case 3.} Assume that $i = j \neq n-1$. Then $a = n-1 = b$, $c \neq n-1$, and $d \neq n-1$. It follows that $a = b \neq c$, $a = b \neq d$, and either $c = d$ or $c \neq d$. The only possible relationships among $a$, $b$, $c$, and $d$ and the corresponding values of $\delta(\sigma,\tau)$ and $\delta^{\CT}(\sigma,\tau)$ are shown in Table \ref{tab:imp4}. These correspond to lines 3 and 4 of Table \ref{table:hd-conditions}.

\begin{table}[ht]
\begin{center}
\begin{tabular}{lcc}
\hline
Conditions on $a$, $b$, $c$, $d$ & $\delta(\sigma,\tau)$ & $\delta^{\CT}(\sigma,\tau)$  \\
\hline\hline  
$a = b$, $c$, $d$ pairwise distinct & $1$ & $1$ \\
$a = b \neq c = d$ & $0$ & $0$ \\
\hline
\end{tabular}
\caption{Values of $\delta$ and $\delta^{\CT}$ for Case 3 of the proof of Proposition \ref{prop:hd-decrease}}
\label{tab:imp4}
\end{center}
\end{table}

\emph{Case 4.} Assume that $i = j = n-1$. Then $a = b = c = d = n-1$ and so $\delta(\sigma,\tau) = \delta^{\CT}(\sigma,\tau) = 0$. This yields line 5 of Table \ref{table:hd-conditions}.

Statements 1--4 of the proposition follow immediately from Table \ref{table:hd-conditions}. This completes the proof.
\end{proof}

The proof of Proposition \ref{prop:decomposition} relies on the next lemma, which tells us that the actions of the permutations $\sigma^{-1}\tau$ and $\big(\sigma^{\CT}\big)^{-1}\tau^{\CT}$ are mostly the same. Recall from Lemma \ref{lem:inverse-CT} that the operations of contraction and inversion commute, and that for any $\sigma \in S_n$, $\sigma^{\CT}(x) = \sigma(x)$ whenever $x\neq \sigma^{-1}(n-1)$.

\begin{lemma} \label{lemma:images}
Let $\sigma ,\tau \in S_n$ and $x\in\{0,1,\ldots,n-2\}$. If $x \notin \big\{ \sigma(n - 1), \tau^{-1}\sigma(n-1) \big\}$, then 
$\big(\sigma^{\CT}\big)^{-1}\tau^{\CT}(x) = \sigma^{-1}\tau(x)$.
\end{lemma}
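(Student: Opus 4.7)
The plan is to unpack both sides of the claimed equality using the definition of contraction together with Lemma~\ref{lem:inverse-CT}, and to track where each of the two exclusions on $x$ is used. Under the left-to-right convention, the identity to prove reads
\[ \tau^{\CT}\big((\sigma^{\CT})^{-1}(x)\big) \;=\; \tau\big(\sigma^{-1}(x)\big). \]

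First I would replace $(\sigma^{\CT})^{-1}$ by $(\sigma^{-1})^{\CT}$ via Lemma~\ref{lem:inverse-CT}. The general fact recalled just before the statement, namely that $\pi^{\CT}(y) = \pi(y)$ whenever $y \neq \pi^{-1}(n-1)$, applied with $\pi = \sigma^{-1}$, yields $(\sigma^{\CT})^{-1}(x) = \sigma^{-1}(x)$ provided $x \neq \sigma(n-1)$, since the preimage of $n-1$ under $\sigma^{-1}$ is $\sigma(n-1)$. The first excluded value in the hypothesis supplies exactly this.

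Next, setting $y := \sigma^{-1}(x)$, I would apply the same observation with $\pi = \tau$ to obtain $\tau^{\CT}(y) = \tau(y)$ as long as $y \neq \tau^{-1}(n-1)$. Unwinding, the bad case is $\sigma^{-1}(x) = \tau^{-1}(n-1)$, equivalently $x = \sigma\big(\tau^{-1}(n-1)\big)$, which under the left-to-right convention equals $\tau^{-1}\sigma(n-1)$. The second exclusion on $x$ rules this out, so $\tau^{\CT}\big(\sigma^{-1}(x)\big) = \tau\big(\sigma^{-1}(x)\big)$, and chaining the two equalities completes the proof.

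There is no real obstacle here; this is essentially a definition-chase. The only genuine point of care is the left-to-right convention, which makes it easy to conflate $\tau^{-1}\sigma(n-1)$ with $\sigma\tau^{-1}(n-1)$ when translating the condition $\sigma^{-1}(x) = \tau^{-1}(n-1)$ back into a condition on $x$. Invoking Lemma~\ref{lem:inverse-CT} at the outset also sidesteps the mild awkwardness of inverting the piecewise definition of $\sigma^{\CT}$ directly.
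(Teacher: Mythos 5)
Your proof is correct and follows essentially the same route as the paper: invoke Lemma~\ref{lem:inverse-CT} together with the fact that $\pi^{\CT}$ agrees with $\pi$ off $\pi^{-1}(n-1)$, using the exclusion $x\neq\sigma(n-1)$ for the $\sigma^{-1}$ step and the exclusion $x\neq\tau^{-1}\sigma(n-1)$ (i.e.\ $\sigma^{-1}(x)\neq\tau^{-1}(n-1)$) for the $\tau$ step. Your handling of the left-to-right composition convention is also exactly as in the paper's argument.
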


\begin{proof}
It follows from Lemma \ref{lem:inverse-CT} and from $x \neq \sigma(n-1)$ that
\[ \sigma^{-1}(x) = \big(\sigma^{-1}\big)^{\CT}(x) = \big(\sigma^{\CT}\big)^{-1}(x). \]
Note that $y := \sigma^{-1}(x) \neq \tau^{-1}(n-1)$. Thus
    \[ \sigma^{-1}\tau(x) = \tau(y) = \tau^{\CT}(y) = \sigma^{-1}\tau^{\CT}(x) = \big(\sigma^{\CT}\big)^{-1}\tau^{\CT}(x).\qedhere\]
\end{proof}

We denote the length of a cycle $\rho$ by $\vert \rho\vert $. Following \cite{Sudborough_Constructing_2018}, we define the length of the identity permutation to be zero. Likewise, we will say that a symbol $x\in \{0, 1, \ldots, n-1\}$ \emph{belongs to a cycle of zero length} in the disjoint cycle decomposition of $\sigma \in S_n$ whenever $\sigma(x) = x$.

\begin{prop}\label{prop:decomposition}
Let $\sigma, \tau \in S_n$ such that $\sigma^{\CT} \neq \tau^ {\CT}$. Let $\omega$ and $\pi$ be the cycles in the disjoint cycle decomposition of $\sigma^{-1}\tau$ which contain $n-1$ and $\sigma(n-1)$, respectively. Suppose that
    \[ \sigma^{-1}\tau = 
    \begin{cases}
    \rho\,\omega\,\pi & \text{if } \omega\neq\pi, \\
    \rho\, \omega& \text{if } \omega=\pi,
    \end{cases}\]
where $\rho \in S_n$ is disjoint from $\omega$ and $\pi$. Then
    \[ \big(\sigma^{\CT}\big)^{-1}\tau^{ CT} = \rho\,\chi \]
for some $\chi \in S_{n-1}$ disjoint from $\rho$, and the following hold.
    \begin{enumerate}
    \item If $\hd\big(\sigma^{\CT},\tau^{\CT}\big) = \hd(\sigma, \tau)$, then either $\omega = (0)$ or $\pi = (0)$, and $\chi$ is a cycle with $\vert \chi\vert  = \vert \omega\,\pi\vert $.
    \item Suppose that $\hd\big(\sigma^{\CT},\tau^{\CT}\big) = \hd(\sigma,\tau) - 1$. 
        \begin{enumerate}
        \item If $\omega = \pi$ then either $\chi$ is a cycle with $\vert \chi\vert  = \vert \omega\vert  - 1$, or $\chi$ is a product of two disjoint cycles $\omega'$ and $\pi'$ with $\vert \omega'\vert  + \vert \pi'\vert  = \vert \omega\vert  - 1$.
        \item If $\omega \neq \pi$ then $\chi$ is a cycle with $\vert \chi\vert  = \vert \omega\vert  + \vert \pi\vert  - 1$.
        \end{enumerate}
       \item If $\hd\big(\sigma^{\CT},\tau^{\CT}\big) = \hd(\sigma,\tau) - 2$, then $\omega = \pi$ and $\chi$ is a cycle with $\vert \chi\vert  = \vert \omega\vert  - 2$.
       \item If $\hd\big(\sigma^{\CT},\tau^{\CT}\big) = \hd(\sigma,\tau) - 3$, then $\omega = \pi$, $\vert \omega\vert  = 3$, and $\chi = (0)$.
    \end{enumerate}
\end{prop}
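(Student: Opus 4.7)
The plan is to first establish the general decomposition $\big(\sigma^{\CT}\big)^{-1}\tau^{\CT} = \rho\chi$, and then verify each of the four numbered claims via a case analysis driven by Proposition \ref{prop:hd-decrease}.

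For the decomposition, I would apply Lemma \ref{lemma:images}. With the left-to-right composition convention, $\tau^{-1}\sigma(n-1) = \sigma\big(\tau^{-1}(n-1)\big) = \sigma(j) = b$ in the notation of Proposition \ref{prop:hd-decrease}, so the exceptional set of the lemma is exactly $\{b,c\}$ with $c = \sigma(n-1)$. By definition $c \in \operatorname{supp}(\pi)$, and from $\sigma^{-1}\tau(b) = \tau(j) = n-1$ we see that either $b = n-1$ or $b$ lies in the cycle $\omega$. Hence $\{b,c\} \subseteq \operatorname{supp}(\omega) \cup \operatorname{supp}(\pi) \cup \{n-1\}$, which is disjoint from $\operatorname{supp}(\rho)$. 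Lemma \ref{lemma:images} then shows that $\rho$ (restricted to $\{0,\ldots,n-2\}$) is a factor of $\big(\sigma^{\CT}\big)^{-1}\tau^{\CT}$, and the complementary factor $\chi$ has support contained in $\big(\operatorname{supp}(\omega) \cup \operatorname{supp}(\pi)\big) \setminus \{n-1\}$, so in particular $\chi$ is disjoint from $\rho$.

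The heart of the proof is then a direct evaluation of $\big(\sigma^{\CT}\big)^{-1}\tau^{\CT}$ at the two exceptional points $b$ and $c$, which determines how the arcs of $\sigma^{-1}\tau$ around $n-1$ are rerouted after contraction. Generically, the arcs $b \to n-1 \to a$ in $\omega$ and $c \to d$ in $\pi$ are replaced by the new arcs $b \to d$ and $c \to a$, with $n-1$ removed from the support; the degenerate situations ($b = n-1$, $c = n-1$, or $i = j$) follow the same template with minor adjustments. Once this rerouting is pinned down in each of the fourteen subcases of Table \ref{table:hd-conditions}, the cycle structure of $\chi$ can be read off from that of $\omega\pi$. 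I would organise the verification by the value of $\Delta\hd(\sigma,\tau)$: when $\Delta\hd = 0$, every subcase forces either $c = d$ (so $\pi$ is the trivial cycle at $c$) or $i = j$ (so $a = b = n-1$ and $\omega$ is the trivial cycle at $n-1$), and the rerouting produces a single cycle of length $\vert\omega\pi\vert$; when $\Delta\hd = 2$, the defining hypotheses force $\omega = \pi$ together with either $a = c$ or $b = d$, making $c$ or $b$ (respectively) a new fixed point, which removes two symbols from the cycle and gives $\vert\chi\vert = \vert\omega\vert - 2$; and when $\Delta\hd = 3$, the simultaneous equalities $a = c$ and $b = d$ force $\omega = \pi$ to be the 3-cycle $(n-1, a, b)$, which collapses entirely under the rerouting.

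The main obstacle is the $\Delta\hd = 1$ case, the only one where both merge and split behaviours can arise. In subcases 8 and 9 of Table \ref{table:hd-conditions} (where $i$ or $j$ equals $n-1$), the equality $c = n-1$ or its symmetric counterpart $b = c$ forces $\omega = \pi$, and the rerouting simply deletes $n-1$ from the cycle, giving a single cycle $\chi$ of length $\vert\omega\vert - 1$. In subcases 6 and 7 both $\omega = \pi$ and $\omega \neq \pi$ are possible, and these correspond precisely to claims 2(a) and 2(b): if $\omega \neq \pi$, the new arcs $b \to d$ and $c \to a$ merge the two cycles into a single cycle of length $\vert\omega\vert + \vert\pi\vert - 1$; if $\omega = \pi$, the same rerouting splits $\omega$ into two disjoint cycles of total length $\vert\omega\vert - 1$. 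Carefully tracing the cycle around $n-1, a, b, c, d$ in this last situation is the main bookkeeping step of the proof.
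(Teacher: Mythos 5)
Your proposal is correct and follows essentially the same route as the paper: Lemma \ref{lemma:images} isolates the exceptional points $b=\tau^{-1}\sigma(n-1)$ and $c=\sigma(n-1)$ to get the factorization $\rho\chi$, and then the rerouted arcs $b\to d$ and $c\to a$ (with the degenerate adjustments when $b=n-1$, $c=n-1$, or $i=j$) are traced through the fourteen lines of Table \ref{table:hd-conditions} to read off $\chi$, exactly as in the paper's Table \ref{table:omega-pi}. The only difference is one of completeness rather than method: the paper tabulates $\omega$, $\pi$, and $\chi$ explicitly for every line, whereas you describe the generic computation and indicate how the subcases fall out.
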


\begin{proof}
Let $i = \sigma^{-1}(n-1)$, $j = \tau^{-1}(n-1)$, $a = \tau(i)$, $b = \sigma(j)$, $c = \sigma(n-1)$, and $d = \tau(n-1)$. Since $\sigma^{-1}\tau(b) = n-1$, the disjoint cycle decomposition of $\sigma^{-1}\tau$ contains a cycle $\omega := (b, \, n-1, \, a, \mathbf{x})$, where $\mathbf{x}$ is either empty or a sequence of distinct elements of $\{0, 1, \ldots, n-1\} \setminus \{a, b, n-1\}$. Likewise $\sigma^{-1}\tau(c) = d$, so the disjoint cycle decomposition of $\sigma^{-1}\tau$ contains a cycle $\pi := (c, \, d, \mathbf{y})$, with $\mathbf{y}$ empty or a sequence of distinct elements of $\{0, 1, \ldots, n-1\} \setminus \{c, d\}$. We can write $\sigma^{-1}\tau$ as $\sigma^{-1}\tau = \rho\,\omega\,\pi$ when $\omega\neq \pi$ and $\sigma^{-1}\tau = \rho\,\omega$ when $\omega=\pi$. Here $\rho \in S_n$ is disjoint from $\omega$ and $\pi$, so $\rho$ fixes both $n-1$ and $c$, and hence also $a$, $b$, and $d$.

Suppose that the conditions in the last column of line 1 of Table \ref{table:hd-conditions} hold. Then $\sigma^{-1}\tau(c) = d = c$, so that $\pi = (c) = (d) = (0)$. Since $a \neq b$, the cycle $\omega$ is nontrivial, and thus $\omega \neq \pi$.

By going through the conditions in each line of Table \ref{table:hd-conditions} in a similar manner, we can deduce all possible forms of $\omega$ and $\pi$. We summarize these in Table \ref{table:omega-pi}.

\begin{table}[ht]
\begin{center}
\begin{tabular}{rllcl}
\hline 
& $\omega$ & $\pi$ &  & $\chi$ \\
\hline \hline
\textsf{\small 1} & $(b, \, n-1, \, a, \, \mathbf{x})$ & $(0)$ & $\omega \neq \pi$ & $(b, \, c, \, a, \, \mathbf{x})$ \\
\textsf{\small 2} & $(b, \, n-1)$ & $(0)$ & $\omega \neq \pi$ & $(b, \, c)$ \\
\textsf{\small 3} & $(0)$ & $(c, \, d, \mathbf{y})$ & $\omega \neq \pi$ & $(c, \, d, \, \mathbf{y})$ \\
\textsf{\small 4} & $(0)$ & $(0)$ &  & $(0)$ \\
\textsf{\small 5} & $(0)$ & $(0)$ &  & $(0)$ \\
\hline
\textsf{\small 6} & $(b, \, n-1, \, a, \, \mathbf{x})$ & $(c, \, d, \, \mathbf{y})$ & $\omega \neq \pi$ & $(b, \, d, \, \mathbf{y}, \, c, \, a, \, \mathbf{x})$ \\
& $(b, \, n-1, \, a, \, \mathbf{x}, \, c, \, d, \, \mathbf{y})$ & $(b, \, n-1, \, a, \, \mathbf{x}, \, c, \, d, \, \mathbf{y})$ &  & $(b, \, d, \, \mathbf{y})(c, \, a, \, \mathbf{x})$ \\
\textsf{\small 7} & $(b, \, n-1)$ & $(c, \, d, \, \mathbf{y})$ & $\omega \neq \pi$ & $(b, \, d, \, \mathbf{y}, \, c)$ \\
\textsf{\small 8} & $(b, \, n-1, \, a, \, \mathbf{x})$ & $(b, \, n-1, \, a, \, \mathbf{x})$ &  & $(b, \, a, \, \mathbf{x})$ \\
\textsf{\small 9} & $(b, \, n-1, \, a, \, \mathbf{x})$ & $(b, \, n-1, \, a, \, \mathbf{x})$ &  & $(b, \, a, \, \mathbf{x})$ \\
\hline
\textsf{\small 10} & $(b, \, n-1, \, a, \, d, \, \mathbf{x})$ & $(b, \, n-1, \, a, \, d, \, \mathbf{x})$ &  & $(b, \, d, \, \mathbf{x})$ \\
\textsf{\small 11} & $(b, \, n-1, \, a, \, \mathbf{x}, c)$ & $(b, \, n-1, \, a, \, \mathbf{x}, \, c)$ &  & $(a, \, \mathbf{x}, \, c)$ \\
\textsf{\small 12} & $(b, \, n-1)$ & $(b, \, n-1)$ &  & $(0)$ \\
\textsf{\small 13} & $(b, \, n-1)$ & $(b, \, n-1)$ &  & $(0)$ \\
\hline
\textsf{\small 14} & $(b, \, n-1, \, a)$ & $(b, \, n-1, \, a)$ &  & $(0)$ \\
\hline
\end{tabular}
\caption{Factors $\omega$ and $\pi$ of $\sigma^{-1}\tau$, and $\chi$ of $\big(\sigma^{\CT}\big)^{-1}\tau^{\CT}$, corresponding to each line of Table \ref{table:hd-conditions}}
\label{table:omega-pi}
\end{center}
\end{table}

By Lemma \ref{lemma:images}, $(\sigma^{\CT})^{-1}\tau^{\CT}(x) = \sigma^{-1}\tau(x)$ for any $x \in {\rm supp}(\rho)$. Thus 
    \[ (\sigma^{\CT})^{-1}\tau^{\CT} = \rho \chi \]
for some $\chi \in S_{n-1}$ that is disjoint from $\rho$, and which moves some element in $\{a, b, c, d\}$ if $\chi\neq (0)$. 

To determine the form of $\chi$, we consider the images and inverse images of $a$, $b$, $c$, and $d$ under $\big(\sigma^{\CT}\big)^{-1}\tau^{\CT}$.
By Lemma \ref{lemma:images} we have $\big(\sigma^{\CT}\big)^{-1}\tau^{\CT}(x) = \sigma^{-1}\tau(x)$ for any $x \notin \{b,c\}$, and $\big(\tau^{\CT}\big)^{-1}\sigma^{\CT}(x) = \tau^{-1}\sigma(x)$ for any $x \notin \{a,d\}$. 

Assume that the conditions in line~1 of Table \ref{table:hd-conditions} hold. Then $a \notin \{b,c\}$, so $\big(\sigma^{\CT}\big)^{-1}\tau^{\CT}(a) = \sigma^{-1}\tau(a) = \omega(a)$ by the above. Similarly, $b \notin \{a,d\}$, so that $\big(\tau^{\CT}\big)^{-1}\sigma^{\CT}(b) = \tau^{-1}\sigma(b) = \omega^{-1}(b)$. Moreover $j\neq i$, so that $\sigma^{\CT}(j) = \sigma(j) = b$ and
    \[ \big(\sigma^{\CT}\big)^{-1}\tau^{\CT}(b) = \tau^{\CT}(j) = \tau(n-1) = d = c. \]
Since $c = \sigma(n-1) = \sigma^{\CT}(i)$ and $i\neq j$,
    \[ \big(\sigma^{\CT}\big)^{-1}\tau^{\CT}(d) = \big(\sigma^{\CT}\big)^{-1}\tau^{\CT}(c) = \tau^{\CT}(i) = \tau(i) = a.  \]
A similar analysis yields the other images and inverse images corresponding to the conditions in each line of Table \ref{table:hd-conditions}.

We now obtain $\chi$. Note that $\big(\sigma^{\CT}\big)^{-1}\tau^{\CT}(x) = \chi(x)$ for $x \in \{a, b, c, d\}$ and $\big(\sigma^{\CT}\big)^{-1}\tau^{\CT}(x) = \sigma^{-1}\tau(x)$ otherwise. Thus, for the conditions in line 1 of Table \ref{table:hd-conditions}, we have 
    \[ \chi(b) = c, \ \chi(c) = a, \ \chi(a) = \omega(a), \]
and if $\mathbf{x}$ is non-empty, $\chi(x) = \omega(x)$ for any $x \in \mathbf{x}$. Thus, $\chi = (b, \, c, \, a, \, \mathbf{x})$. The forms for $\chi$ corresponding to the rest of the lines in Table \ref{table:hd-conditions} are obtained similarly. These are summarized in the last column of Table \ref{table:omega-pi}.

The conclusions of the proposition follow immediately from Table \ref{table:omega-pi}.
\end{proof}

\section{Multiple contractions}

In this section we consider the effect on Hamming distance when applying the contraction operation several times. The main results of this section are Theorem \ref{theorem:lowerbound} and Corollaries \ref{cr1}, \ref{cr2}, \ref{cr3}, and \ref{cr4}.
These give conditions that bound the decrease in Hamming distance after repeated contractions.

For a permutation array $P \subseteq S_n$ with $\hd(P) = d$ and for any $m \in \{1, \ldots, n-1\}$ such that $\big\vert P^{\CT^m}\big\vert  > 1$, it follows from Lemma \ref{lemma:hd-bounds}.1 that
    \[ \hd\big(P^{\CT^m}\big) \geq d - 3m. \]
Indeed, for any distinct $\sigma,\tau\in P$, $\hd\big(\sigma^{\CT^i},\tau^{\CT^i}\big) - \hd\big(\sigma^{\CT^{i+1}},\tau^{\CT^{i+1}}\big) \leq 3$ for each $i \in \{0, \ldots, m-1\}$; thus
    \begin{align*}
    \hd(\sigma,\tau) - \hd\big(\sigma^{\CT^m},\tau^{\CT^m}\big)
    &= \sum_{i=0}^{m-1} \left( \hd\big(\sigma^{\CT^i},\tau^{\CT^i}\big) - \hd\big(\sigma^{\CT^{i+1}},\tau^{\CT^{i+1}}\big) \right) \\
    &\leq 3m
    \end{align*}
and so $\hd\big(\sigma^{\CT^m},\tau^{\CT^m}\big) \geq \hd(\sigma,\tau) - 3m \geq d - 3m$. In Theorem \ref{theorem:lowerbound} we give conditions that lead to an improvement of this bound. We first prove Lemma \ref{lemma:oddcycles}, which is a technical result used in the proof of Theorem \ref{theorem:lowerbound}.

\begin{lemma} \label{lemma:oddcycles}
Let $\mathcal{C} = \{\gamma_1, \ldots, \gamma_r\}$ be a set of cycles of odd length in the disjoint cycle decomposition of $\big(\sigma^{\CT}\big)^{-1}\tau^{\CT}$. Let $\omega$ and $\pi$ be the factors in the disjoint cycle decomposition of $\sigma^{-1}\tau$ as described in Proposition \ref{prop:decomposition}. Then, under a suitable rearrangement of the indices of the elements of $\mathcal{C}$, the disjoint cycle decomposition of $\sigma^{-1}\tau$ has a set $\mathcal{D}$ of cycle factors of odd length, where $\mathcal{D}$ satisfies one of the following:
    \begin{enumerate}
    \item If $\hd\big(\sigma^{\CT},\tau^{\CT}\big) = \hd(\sigma,\tau)$, then $\mathcal{D}$ is $\mathcal{C}$ or $\{\gamma_1, \ldots, \gamma_{r-1}, \omega\pi\}$, with $\vert \omega\pi\vert  = \vert \gamma_r\vert $;
    \item If $\hd\big(\sigma^{\CT},\tau^{\CT}\big) = \hd(\sigma,\tau) - 1$ and $r > 1$, then $\mathcal{D}$ is $\{\gamma_1, \ldots, \gamma_{r-2}, \delta_{r-1}\}$ for some $\delta_{r-1} \in \{\gamma_{r-1}, \omega\}$, with $\vert \delta_{r-1}\vert  \in \{\vert \gamma_{r-1}\vert , \ \vert \gamma_{r-1}\vert  + \vert \gamma_r\vert  + 1\}$;
    \item If $\hd\big(\sigma^{\CT},\tau^{\CT}\big) = \hd(\sigma,\tau) - 2$, then $\mathcal{D}$ is $\mathcal{C}$ or $\{\gamma_1, \ldots, \gamma_{r-1}, \omega\}$, with $\vert \omega\vert  = \vert \gamma_r\vert  + 2$;
    \item If $\hd\big(\sigma^{\CT},\tau^{\CT}\big) = \hd(\sigma,\tau) - 3$, then $\mathcal{D} = \mathcal{C} \cup \{\omega\}$, with $\vert \omega\vert  = 3$.
    \end{enumerate}
\end{lemma}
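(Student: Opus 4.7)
The plan is to invoke Proposition~\ref{prop:decomposition} and trace how its description of $\chi$ determines the odd-length factors of $(\sigma^{\CT})^{-1}\tau^{\CT}$. Writing $\sigma^{-1}\tau = \rho\omega\pi$ (or $\rho\omega$ when $\omega=\pi$) and $(\sigma^{\CT})^{-1}\tau^{\CT}=\rho\chi$, the key observation is that $\rho$ is a common cycle factor of both permutations, so every cycle of $\rho$ is simultaneously a cycle factor of $\sigma^{-1}\tau$ and of $(\sigma^{\CT})^{-1}\tau^{\CT}$. Consequently each $\gamma_i\in\mathcal{C}$ is either a cycle of $\rho$ (hence already a cycle factor of $\sigma^{-1}\tau$ that we may retain in $\mathcal{D}$) or a cycle factor of $\chi$. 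The proof reduces to examining, case by case, which $\gamma_i$ come from $\chi$ and substituting for them suitable cycle factors of $\sigma^{-1}\tau$ from among $\omega$, $\pi$, and $\omega\pi$.

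First I would handle Cases~1, 3, and 4, all of which are short. In Case~1, one of $\omega,\pi$ is trivial, so $\omega\pi$ is itself a cycle factor of $\sigma^{-1}\tau$ with $|\omega\pi|=|\chi|$; I would set $\mathcal{D}=\mathcal{C}$ when $\chi\notin\mathcal{C}$ and otherwise reindex so $\gamma_r=\chi$ and take $\mathcal{D}=\{\gamma_1,\ldots,\gamma_{r-1},\omega\pi\}$. Case~3 is analogous: $|\chi|=|\omega|-2$ forces $|\chi|$ and $|\omega|$ to share parity, so either $\chi\notin\mathcal{C}$ and $\mathcal{D}=\mathcal{C}$, or $\gamma_r=\chi$ after reindexing and $\omega$ is an odd cycle factor of $\sigma^{-1}\tau$ of length $|\gamma_r|+2$. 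Case~4 is immediate because $\chi=(0)$ forces every $\gamma_i$ to come from $\rho$ while $\omega$ is an odd $3$-cycle factor of $\sigma^{-1}\tau$, giving $\mathcal{D}=\mathcal{C}\cup\{\omega\}$.

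The hard part will be Case~2, which Proposition~\ref{prop:decomposition} splits further into: (i)~$\omega=\pi$ with $\chi$ a single cycle; (ii)~$\omega=\pi$ with $\chi=\omega'\pi'$ a product of two disjoint cycles satisfying $|\omega'|+|\pi'|=|\omega|-1$; and (iii)~$\omega\neq\pi$ with $\chi$ a single cycle. In~(i) and (iii), at most one factor of $\chi$ can lie in $\mathcal{C}$, so after reindexing (if necessary) so that $\gamma_r=\chi$, I would take $\delta_{r-1}=\gamma_{r-1}$ and $\mathcal{D}=\{\gamma_1,\ldots,\gamma_{r-1}\}$. In~(ii), if at most one of $\omega',\pi'$ lies in $\mathcal{C}$ the same construction works; but if both lie in $\mathcal{C}$, I would reindex so that $\gamma_{r-1}=\omega'$ and $\gamma_r=\pi'$, observe that $|\omega|=|\omega'|+|\pi'|+1$ is the sum of two odd integers plus $1$ and hence odd, and take $\delta_{r-1}=\omega$, giving $\mathcal{D}=\{\gamma_1,\ldots,\gamma_{r-2},\omega\}$ with $|\omega|=|\gamma_{r-1}|+|\gamma_r|+1$, exactly the permitted format. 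The main technical obstacle is this last branch, where we must verify the parity of $|\omega|$ and the length identity $|\omega|=|\gamma_{r-1}|+|\gamma_r|+1$ simultaneously in order to conclude that $\omega$ is a legitimate choice for $\delta_{r-1}$.
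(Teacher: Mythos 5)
Your proposal is correct and follows essentially the same route as the paper: both rest on the decomposition $(\sigma^{\CT})^{-1}\tau^{\CT}=\rho\chi$ from Proposition~\ref{prop:decomposition}, observe that every $\gamma_i$ not among the cycle factors of $\chi$ is a factor of $\rho$ and hence of $\sigma^{-1}\tau$, and then substitute $\omega\pi$, $\omega$, or nothing for the at most one (or, in the $\chi=\omega'\pi'$ branch, two) exceptional $\gamma_i$'s using the length relations of that proposition. Your explicit parity checks (e.g.\ that $|\omega'|+|\pi'|+1$ is odd when both $\omega',\pi'$ are odd) are left implicit in the paper but are a welcome addition.
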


\begin{proof}
Recall that $\omega$ and $\pi$ are the respective cycles containing $n-1$ and $\sigma(n-1)$ in the disjoint cycle decomposition of $\sigma^{-1}\tau$. Suppose that $\sigma^{-1}\tau = \rho\,\omega\,\pi$ if $\omega\neq\pi$ and $\sigma^{-1}\tau = \rho\,\omega$ if $\omega=\pi$, where $\rho$ is disjoint from $\omega$ and $\pi$. Then by Proposition \ref{prop:decomposition}, $\big(\sigma^{\CT}\big)^{-1}\tau^{\CT} = \rho\chi$ for some permutation $\chi$ disjoint from $\rho$ which is given in Table \ref{table:omega-pi}.

Assume first that the conditions in any of the lines 1--5 and 7--14, or in line 6 with $\omega \neq \pi$, of Table \ref{table:hd-conditions} are satisfied. In each case $\chi$ is either a cycle or the identity permutation. It follows that there is at most one cycle in $\{\gamma_1, \ldots, \gamma_r\}$ which is equal to $\chi$; if $r >1$ assume without loss of generality that $\gamma_i \neq \chi$ for all $i \in \{1, \ldots, r-1\}$. Then each $\gamma_i\neq \chi$ is a factor in the disjoint cycle decomposition of $\rho$, and hence is a factor in the disjoint cycle decomposition of $\sigma^{-1}\tau$. Take $\delta_i = \gamma_i$ for each $i \in \{1, \ldots, r-1\}$. If $\hd\big(\sigma^{\CT},\tau^{\CT}\big) = \hd(\sigma,\tau) - 1$, then we have proved part of statement 2 of the proposition. Otherwise, it remains for us to choose $\delta_r$ and, if necessary, $\delta_{r+1}$. 

If $\hd\big(\sigma^{\CT},\tau^{\CT}\big) = \hd(\sigma,\tau)$, take
    \[ \delta_r = \left\{ \begin{aligned} &\gamma_r &&\text{if } \chi \neq \gamma_r, \\ & \omega\pi &&\text{if } \chi = \gamma_r. \end{aligned} \right. \]
Since $\omega\pi$ is a cycle of length $\vert \chi\vert $ by Proposition \ref{prop:decomposition} (1), in both cases $\delta_r$ is a factor in the disjoint cycle decomposition of $\sigma^{-1}\tau$ and $\vert \delta_r\vert  = \vert \gamma_r\vert $. If $\hd\big(\sigma^{\CT},\tau^{\CT}\big) = \hd(\sigma,\tau) - 2$, take
    \[ \delta_r = \left\{ \begin{aligned} &\gamma_r &&\text{if } \chi \neq \gamma_r, \\ & \omega &&\text{if } \chi = \gamma_r. \end{aligned} \right. \]
In both cases $\delta_r$ is a factor in the disjoint cycle decomposition of $\sigma^{-1}\tau$. For the case $\delta_r=\omega$, we have $\vert\omega\vert=\vert\chi\vert+2=\vert \gamma_r\vert+2$ by Proposition \ref{prop:decomposition} (3). Setting $\mathcal{D}=\{\delta_1,\ldots,\delta_r\}$ proves statements 1 and 3 of the proposition.

If $\hd\big(\sigma^{\CT},\tau^{\CT}\big) = \hd(\sigma,\tau) - 3$, then $\chi = (0)$ by Table \ref{table:omega-pi} so that $\chi \neq \gamma_r$. In this case we take $\delta_r = \gamma_r$ and $\delta_{r+1} = \omega$. Note that $\delta_r$ and $\delta_{r+1}$ are factors in the disjoint cycle decomposition of $\sigma^{-1}\tau$, and that $\omega$ is a cycle of length~$3$ by Proposition~\ref{prop:decomposition}~(4). Setting $\mathcal{D}=\{\delta_1,\ldots,\delta_{r+1}\}$ proves statement 4 of the proposition.

Now assume that the conditions in line 6 of Table \ref{table:hd-conditions} hold, with $\omega = \pi$. Then by Table \ref{table:omega-pi}, $\chi$ is a product of two disjoint cycles $\omega'$ and $\pi'$. This means that there are at most two cycles in $\{\gamma_1, \ldots, \gamma_r\}$ which are not disjoint from $\chi$. If $r >2$ assume without loss of generality that $\gamma_i \notin\{\omega', \pi'\}$ for all $i \in \{1, \ldots, r-2\}$. 
In addition, if $\{\omega',\pi'\}\neq\{\gamma_{r-1}, \gamma_r\}$ assume without loss of generality that $\gamma_{r-1}\notin \{\omega', \pi'\}$.
Then each $\gamma_i\notin\{\omega', \pi'\}$ is a factor in the disjoint cycle decomposition of $\rho$, and hence is a factor in the disjoint cycle decomposition of $\sigma^{-1}\tau$. Take $\delta_i = \gamma_i$ for each $i \in \{1, \ldots, r-2\}$, and
    \[ \delta_{r-1} = \left\{ \begin{aligned}
    &\gamma_{r-1} &&\text{if } \{\omega',\pi'\}\neq\{\gamma_{r-1}, \gamma_r\}, \\
    &\omega &&\text{if } \{\omega', \pi'\}=\{\gamma_{r-1}, \gamma_r\}.
    \end{aligned} \right. \]
In both cases $\delta_{r-1}$ is a factor in the disjoint cycle decomposition of $\sigma^{-1}\tau$. 
By Proposition \ref{prop:decomposition} (2), the permutation $\omega$ is a cycle of length $\vert \omega'\vert  + \vert \pi'\vert  + 1$ . Thus, in each case we obtain $\vert \delta_{r-1}\vert  \in \{ \vert \gamma_{r-1}\vert , \ \vert \gamma_{r-1}\vert  + \vert \gamma_r\vert  + 1 \}$. This completes the proof of statement 2 of the proposition.
\end{proof}

Theorem \ref{theorem:lowerbound} is a generalization of \cite[Theorem 3]{Sudborough_Constructing_2018} to multiple contractions. 
It may seem that the result follows inductively from \cite[Theorem 3] {Sudborough_Constructing_2018}.
On the contrary, the proof of Theorem \ref{theorem:lowerbound} relies heavily on Lemma \ref{lemma:oddcycles}. 

\begin{thm} \label{theorem:lowerbound}
Let $\sigma, \tau \in S_n$ with $\hd(\sigma,\tau) = d$. Let $m \in \{1, \ldots, n-1\}$ and assume that the disjoint cycle decomposition of $\sigma^{-1}\tau$ has no factor of odd length $\ell$ where $3 \leq \ell \leq 2m+1$. Then $\hd\big(\sigma^{\CT^m},\tau^{\CT^m}\big) \geq d - 2m$. 
\end{thm}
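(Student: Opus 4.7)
My plan is to proceed by induction on $m$, using Lemma~\ref{lemma:oddcycles} as the main technical tool. The base case $m=1$ is immediate: the hypothesis forbids $\sigma^{-1}\tau$ from having a $3$-cycle factor, and Proposition~\ref{prop:hd-decrease}~(4) (line 14 of Table~\ref{table:hd-conditions}) then gives $\hd(\sigma,\tau) - \hd\big(\sigma^{\CT},\tau^{\CT}\big) \leq 2$, which matches $d - 2m = d - 2$.

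For the inductive step, I assume the theorem for all values smaller than $m$ and let $(\sigma,\tau)$ satisfy the hypothesis for $m$. Writing $\Delta_i := \hd\big(\sigma^{\CT^i},\tau^{\CT^i}\big) - \hd\big(\sigma^{\CT^{i+1}},\tau^{\CT^{i+1}}\big)$, the goal is to show $\sum_{i=0}^{m-1} \Delta_i \leq 2m$. I argue by contradiction: suppose this sum exceeds $2m$. Since $\Delta_0 \leq 2$ (by the base-case reasoning applied to $(\sigma,\tau)$) and each $\Delta_i \in \{0,1,2,3\}$, there must exist a smallest index $j \in \{1,\ldots,m-1\}$ with $\Delta_j = 3$. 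By Proposition~\ref{prop:hd-decrease}~(4), the permutation $\big(\sigma^{\CT^j}\big)^{-1}\tau^{\CT^j}$ contains a $3$-cycle $\gamma$.

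The next stage is to trace $\gamma$ backward through the $j$ preceding contractions by iterating Lemma~\ref{lemma:oddcycles}. Beginning with $\mathcal{C}_j = \{\gamma\}$, each backward step from $\big(\sigma^{\CT^k},\tau^{\CT^k}\big)$ to $\big(\sigma^{\CT^{k-1}},\tau^{\CT^{k-1}}\big)$ produces a set $\mathcal{C}_{k-1}$ of odd-length cycle factors of $\big(\sigma^{\CT^{k-1}}\big)^{-1}\tau^{\CT^{k-1}}$. Because $\Delta_k \leq 2$ for $k < j$ (by minimality of $j$), only cases (1), (2), and (3) of Lemma~\ref{lemma:oddcycles} can arise, and each has a controlled effect on the multiset of cycle lengths: case (1) preserves lengths, case (3) can lengthen one cycle by $2$, and case (2) can consolidate two tracked cycles into one of length $|\gamma_{r-1}| + |\gamma_r| + 1$. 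A careful accounting of the evolution of cumulative cycle lengths over $j$ backward steps should exhibit in $\sigma^{-1}\tau$ an odd cycle factor of length at most $2j+1 \leq 2m-1$, contradicting the hypothesis.

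The main obstacle will be invoking case (2) of Lemma~\ref{lemma:oddcycles}, which requires $r > 1$. To apply it during the backward trace, I must arrange that $\mathcal{C}_k$ always contains at least two elements whenever a drop-$1$ step is encountered. This forces an augmentation step: when an earlier drop-$1$ contraction splits an even-length $\omega = \pi$ into two disjoint odd cycles (as in the second subcase of line~6 of Table~\ref{table:omega-pi}), both offspring cycles must be included in the tracked set rather than just one. Verifying that this augmentation can always be carried out, and that the odd cycle factors it ultimately produces in $\sigma^{-1}\tau$ still lie in the forbidden interval $[3, 2m+1]$, is the delicate part of the argument and the reason the result does not follow by naive induction from Proposition~\ref{prop:hd-decrease}.
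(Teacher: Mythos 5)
Your overall architecture (induction on $m$, backward tracing of odd cycles via Lemma~\ref{lemma:oddcycles}) matches the paper's, but the inductive step has a genuine gap at its central point. You propose to locate the \emph{smallest} index $j$ with $\Delta_j=3$, note that all earlier steps have drop at most $2$, and pull the resulting single $3$-cycle back through those $j$ steps. This fails whenever a drop-$1$ step is met while only one odd cycle is being tracked: statement~2 of Lemma~\ref{lemma:oddcycles} requires $r>1$, and when $r=1$ there is genuinely nothing to pull back --- the lone odd cycle $\gamma$ may be one component $\omega'$ of $\chi=\omega'\pi'$ whose companion $\pi'$ has \emph{even} length, so the parent cycle $\omega$, of length $|\omega'|+|\pi'|+1$, is even and contributes no odd factor at the earlier level. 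Your proposed ``augmentation'' cannot repair this, because the companion $\pi'$ need not be odd and so cannot be adjoined to a tracked set of odd cycles. A concrete instance where your trace dies is $m=3$ with drop sequence $(\Delta_0,\Delta_1,\Delta_2)=(1,3,3)$ (sum $7>6$): your smallest $j$ is $1$, and the single $3$-cycle at level $1$ can be absorbed into an even cycle of $\sigma^{-1}\tau$ by the $\Delta_0=1$ step.

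The paper's proof avoids this by running the trace from the \emph{last} contraction all the way back to level $0$, through every step including the drop-$3$ ones; each drop-$3$ step encountered going backward \emph{adds} a fresh $3$-cycle to the tracked set (statement~4 of Lemma~\ref{lemma:oddcycles}), which is what allows subsequent drop-$1$ steps to be absorbed. Two quantitative ingredients that your sketch omits then close the argument: (i) the inductive hypothesis forces $\hd\big(\sigma^{\CT^m},\tau^{\CT^m}\big)=d-2m$ exactly, hence $\sum_i d_i=2m+3$ exactly, which yields $c>a$ (strictly more drop-$3$ than drop-$1$ steps) and therefore a nonempty tracked set at level $0$; and (ii) the total length of the tracked cycles at level $0$ is bounded by $a_0+2b_0+3c_0=\sum_i d_i=2m+3$, which is what places the surviving odd cycle in the forbidden window. (Your stated bound $2j+1\le 2m-1$ is also not what the trace gives --- a $3$-cycle pulled back through drop-$2$ steps grows to length up to $2j+3$ --- but that is minor compared with the drop-$1$ obstruction.) In the $(1,3,3)$ example the paper's trace carries two $3$-cycles into the $\Delta_0=1$ step and merges them into an odd cycle of length $3$ or $7$ in $\sigma^{-1}\tau$, which is exactly the contradiction you need.
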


\begin{proof}
Since the Hamming distance between two permutations is non-negative, we assume that $d>2m$. We proceed by induction on $m$.

Suppose the disjoint cycle decomposition of $\sigma^{-1}\tau$ has no factor of length $3$. It follows from Lemma \ref{lemma:hd-bounds}.1 and Lemma \ref{lemma:hd-bounds}.2  that $\hd(\sigma,\tau) - \hd(\sigma^{\CT},\tau^{\CT}) \leq 2$. So
    \[ \hd(\sigma^{\CT},\tau^{\CT}) \geq \hd(\sigma,\tau) - 2 = d - 2, \]
and the result holds for $m = 1$.

Let $d > 2(m+1)$ and suppose that the result holds for $1 \leq m < n-1$. 
We will show that the assumption 
\[ \hd\big(\sigma^{\CT^{m+1}},\tau^{\CT^{m+1}}\big) < d - 2(m+1) = d - 2m - 2 \]
leads to the conclusion that the disjoint cycle decomposition of $\sigma^{-1}\tau$ includes a factor of odd length $\ell$ where $3 \leq \ell \leq 2m+3$.

By Lemma \ref{lemma:hd-bounds}.1 we have $\hd\big(\sigma^{\CT^m},\tau^{\CT^m}\big) - \hd\big(\sigma^{\CT^{m+1}},\tau^{\CT^{m+1}}\big) \leq 3$, so that
    \[ \hd\big(\sigma^{\CT^{m+1}},\tau^{\CT^{m+1}}\big) \geq \hd\big(\sigma^{\CT^m},\tau^{\CT^m}\big) - 3 \geq d - 2m - 3. \]
Hence we must have $\hd\big(\sigma^{\CT^{m+1}},\tau^{\CT^{m+1}}\big) = d - 2m - 3$ and $\hd\big(\sigma^{\CT^m},\tau^{\CT^m}\big) = d - 2m$.

For each $i \in \{1, \ldots, m+1\}$ let $d_i = \hd\big(\sigma^{\CT^{i-1}},\tau^{\CT^{i-1}}\big) - \hd\big(\sigma^{\CT^i},\tau^{\CT^i}\big)$. Then by the above $d_{m+1} = 3$. Again by Lemma \ref{lemma:hd-bounds}.1 we have $0 \leq d_i \leq 3$ for each $i$, and
    \[ \sum_{i=1}^{m+1} d_i = \hd(\sigma,\tau) - \hd\big(\sigma^{\CT^{m+1}},\tau^{\CT^{m+1}}\big) = d - (d - 2m - 3) = 2m + 3. \]

It follows from Lemma \ref{lemma:hd-bounds}.2 that for $d_i = 3$, the disjoint cycle decomposition of $\big(\sigma^{\CT^{i-1}}\big)^{-1}\tau^{\CT^{i-1}}$ contains a factor $\beta_i$ of length $3$. In particular, since $d_{m+1} = 3$, the cycle decomposition of $\big(\sigma^{\CT^m}\big)^{-1}\tau^{\CT^m}$ contains a $3$-cycle $\beta_m$. 
Set $E_m = \{\beta_m\}$. Applying Lemma \ref{lemma:oddcycles} to $\big(\sigma^{\CT^m}\big)^{-1}\tau^{\CT^m}$ and $\big(\sigma^{\CT^{m-1}}\big)^{-1}\tau^{\CT^{m-1}}$ with $\mathcal{C} = E_m$, for $d_m \in \{0, 2, 3\}$ we obtain a non-empty set $\mathcal{D}_{m-1}$ of odd cycles in the disjoint cycle decomposition of $\big(\sigma^{\CT^{m-1}}\big)^{-1}\tau^{\CT^{m-1}}$. Define $E_{m-1}$ by
    \[ E_{m-1} = \left\{ \begin{aligned}
    &\mathcal{D}_{m-1} &&\text{if } d_m\in\{0,2,3\}, \\
    &\varnothing &&\text{if } d_m = 1.
    \end{aligned} \right. \]
For each $i \in \{0, \ldots, m-2\}$ define $E_i$ as follows: If $E_{i+1} = \varnothing$, then
    \[ E_i = \left\{ \begin{aligned}
    &\varnothing &&\text{if $d_i\in\{0,1,2\}$}, \\
    &\{\beta_i\} &&\text{if $d_i = 3$;}
    \end{aligned} \right.\]
and if $E_{i+1} \neq \varnothing$, then
    \[ E_i = \left\{ \begin{aligned}
    &\varnothing &&\text{if $d_i = 1$ and $\vert E_{i+1}\vert  = 1$}, \\
    &\mathcal{D}_i &&\text{otherwise},
    \end{aligned} \right.\]
where $\mathcal{D}_i$ is the set obtained by applying Lemma \ref{lemma:oddcycles} to $\big(\sigma^{\CT^i}\big)^{-1}\tau^{\CT^i}$ and $\big(\sigma^{\CT^{i+1}}\big)^{-1}\tau^{\CT^{i+1}}$ with $\mathcal{C} = E_{i+1}$. Hence each non-empty $E_i$ consists of cycles of odd length which appear in the disjoint cycle decomposition of $\big(\sigma^{\CT^i}\big)^{-1}\tau^{\CT^i}$. Moreover for each $i \in \{0, \ldots, m-1\}$ we have $\vert E_i\vert  = \vert E_{i+1}\vert $ if $d_i\in\{0,2\}$, $\vert E_i\vert  = \vert E_{i+1}\vert  + 1$ if $d_i = 3$, and $\vert E_i\vert  \geq \vert E_{i+1}\vert  - 1$ if $d_i = 1$. Thus
    \[ \vert E_0\vert  \geq \vert E_m\vert  + a \cdot (-1) + (c-1) \cdot 1 = c - a, \]
where $a = \big\vert \{ i \in \{1, \ldots, m+1\}\,:\, d_i = 1 \}\big\vert $ and $c = \big\vert \{ i \in \{1, \ldots, m+1\}\,:\,d_i = 3 \}\big\vert $. 
Note that $a < c$, otherwise 
    \[ \sum_{i=1}^{m+1} d_i 
    \leq a\cdot 1 + c\cdot 3 +(m+1-a-c)\cdot 2\leq 2m + 2 < 2m + 3.\]
Hence $\vert E_0\vert  \geq 1$. That is, $E_0$ contains at least one cycle, which by definition of $E_0$ is a cycle in the disjoint cycle decomposition of $\sigma^{-1}\tau$ of odd length.

All that remains is to show that some cycle in $E_0$ has length at most $2m+3$.
In fact what we will establish is that every cycle in $E_0$ has length at most $2m+3$.
For each $i \in \{0, \ldots, m\}$, let
    \begin{align*}
    a_i &= \big\vert \{ j \in \{i+1, \ldots, m+1\}\,:\,d_j = 1 \}\big\vert , \\
    b_i &= \big\vert \{ j \in \{i+1, \ldots, m+1\}\,:\,d_j = 2 \}\big\vert , \\
    c_i &= \big\vert \{ j \in \{i+1, \ldots, m+1\}\,:\,d_j = 3 \}\big\vert .
    \end{align*}
Clearly
    \[ a_0 + 2b_0 + 3c_0 = \sum_{i=1}^{m+1} d_i = 2m + 3. \]
We claim that for each $i \in \{0, \ldots, m\}$, the sum of the lengths of the cycles in $E_i$ is at most $a_i + 2b_i + 3c_i$. Indeed, for $i = m$, we have $d_{m+1} = 3$ so that $a_m = b_m = 0$ and $c_m = 1$. Also $E_m = \{\beta_m\}$ where $\vert \beta_m\vert  = 3 = a_m + 2b_m + 3c_m$, so the claim holds for $i = m$. Now let $i \leq m$ and suppose that the claim holds for $i$. We have the following cases.

\emph{Case 1.} Assume that $E_i = \varnothing$. If $d_i \in \{0, 1, 2\}$ then $E_{i-1} = \varnothing$ so the claim holds. If $d_i = 3$ then $c_{i-1} \geq 1$ and $E_{i-1} = \{\beta_{i-1}\}$. The claim also holds because $\vert \beta_{i-1}\vert  = 3 = 3 \cdot 1 \leq a_{i-1} + 2b_{i-1} + 3c_{i-1}$.

\emph{Case 2.} Assume that $E_i \neq \varnothing$, say, $E_i = \{\gamma_1, \ldots, \gamma_r\}$. If $d_i = 1$ and $r = 1$ then $E_{i-1} = \varnothing$, so the claim holds. Otherwise $E_{i-1} = \mathcal{D}_{i-1}$; we consider each of the possibilities described in Lemma \ref{lemma:oddcycles}.

\emph{Case 2.1.} Suppose that $d_i = 0$. Then $E_{i-1} = \{\delta_1, \ldots, \delta_r\}$ with $\delta_j = \gamma_j$ for $1 \leq j \leq r-1$ and $\vert \delta_r\vert  = \vert \gamma_r\vert $. Also $a_{i-1} = a_i$, $b_{i-1} = b_i$, and $c_{i-1} = c_i$. Hence
    \[ \sum_{j=1}^r \vert \delta_j\vert  = \sum_{j=1}^r \vert \gamma_j\vert  \leq a_i + 2b_i + 3c_i = a_{i-1} + 2b_{i-1} + 3c_{i-1}. \]

\emph{Case 2.2.} Suppose that $d_i = 1$ and $r >1$. Then $E_{i-1} = \{\delta_1, \ldots, \delta_{r-1}\}$ where $\delta_j = \gamma_j$ for $1 \leq j \leq r-2$ and $\vert \delta_{r-1}\vert  \in \{ \vert \gamma_{r-1}\vert , \ \vert \gamma_{r-1}\vert  + \vert \gamma_r\vert  + 1 \}$. Also $a_{i-1} = a_i + 1$, $b_{i-1} = b_i$, and $c_{i-1} = c_i$. Hence
    \[ \sum_{j=1}^{r-1} \vert \delta_j\vert  \leq 1+\sum_{j=1}^r \vert \gamma_j\vert \leq a_{i-1} + 2b_{i-1} + 3c_{i-1}. \]

\emph{Case 2.3.} Suppose that $d_i = 2$. Then $E_{i-1} = \{\delta_1, \ldots, \delta_r\}$ with $\delta_j = \gamma_j$ for $1 \leq j \leq r-1$ and $\vert \delta_r\vert  \in \{\vert \gamma_r\vert , \ \vert \gamma_r\vert  + 2\}$. Also $a_{i-1} = a_i$, $b_{i-1} = b_i + 1$, and $c_{i-1} = c_i$. Hence
    \[ \sum_{j=1}^r \vert \delta_j\vert  \leq 2+\sum_{j=1}^r \vert \gamma_j\vert\leq a_{i-1} + 2b_{i-1} + 3c_{i-1}. \]

\emph{Case 2.4.} Suppose that $d_i = 3$. Then $E_{i-1} = \{\delta_1, \ldots, \delta_{r+1}\}$ with $\delta_j = \gamma_j$ for $1 \leq j \leq r$ and $\vert \delta_{r+1}\vert  = 3$. Also $a_{i-1} = a_i$, $b_{i-1} = b_i$, and $c_{i-1} = c_i + 1$. Hence
    \[ \sum_{j=1}^{r+1} \vert \delta_j\vert  = 3+\sum_{j=1}^r \vert \gamma_r\vert\leq a_{i-1} + 2b_{i-1} + 3c_{i-1}. \]
In each case the sum of the lengths of the cycles in $E_{i-1}$ is bounded above by $a_{i-1} + 2b_{i-1} + 3c_{i-1}$. This proves the claim.

In particular the sum $L$ of the lengths of the cycles in $E_0$ satisfies
    \[  L \leq a_0 + 2b_0 + 3c_0=2m + 3.\]
Hence each cycle in the non-empty set $E_0$ has length at most $2m + 3$. 
Therefore, each element of $E_0$ is a cycle in the disjoint cycle decomposition of $\sigma^{-1}\tau$ of odd length $\ell$ where $3 \leq \ell \leq 2m+3$. 
This completes the proof.
\end{proof}

We now apply Theorem \ref{theorem:lowerbound} to obtain a lower bound for the Hamming distance of a permutation array after applying the contraction operation $m$ times.

\begin{cor} \label{cr1}
Let $P \subseteq S_n$ be a permutation array with Hamming distance $d$. Let $m \in \{1, \ldots, n-1\}$, and suppose that $P$ has the property that for any $\sigma,\tau \in P$ the disjoint cycle decomposition of $\sigma^{-1}\tau$ has no cycle of odd length $\ell \in \{3, \ldots, 2m+1\}$. Then
    \begin{enumerate}
    \item $\hd\big(P^{\CT^m}\big) \geq d-2m$, and
    \item if $d > 2m$ then $\big\vert P^{\CT^m}\big\vert  = \vert P\vert $.
    \end{enumerate}
\end{cor}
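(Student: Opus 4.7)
The plan is to deduce Corollary \ref{cr1} directly from Theorem \ref{theorem:lowerbound} by applying it to every pair of distinct permutations in $P$. The hypothesis on the cycle structure of $\sigma^{-1}\tau$ for all $\sigma,\tau\in P$ is precisely what Theorem \ref{theorem:lowerbound} requires on a single pair, so the corollary reduces to a clean aggregation over pairs together with an injectivity argument for part (2).

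For part (1), I would fix any two distinct $\sigma,\tau\in P$. By definition of $\hd(P)=d$, we have $\hd(\sigma,\tau)\geq d$, and by hypothesis, the disjoint cycle decomposition of $\sigma^{-1}\tau$ contains no factor of odd length $\ell$ with $3\leq\ell\leq 2m+1$. Theorem \ref{theorem:lowerbound} then yields
\[
\hd\bigl(\sigma^{\CT^m},\tau^{\CT^m}\bigr)\;\geq\;\hd(\sigma,\tau)-2m\;\geq\;d-2m.
\]
Taking the minimum over all distinct pairs in $P^{\CT^m}$ (or noting the bound is trivial if $|P^{\CT^m}|<2$, since in that case $\hd(P^{\CT^m})$ is vacuously $\infty$) gives $\hd(P^{\CT^m})\geq d-2m$.

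For part (2), I would use the same inequality plus the hypothesis $d>2m$: for any distinct $\sigma,\tau\in P$, the bound above gives
\[
\hd\bigl(\sigma^{\CT^m},\tau^{\CT^m}\bigr)\;\geq\;d-2m\;>\;0,
\]
so $\sigma^{\CT^m}\neq\tau^{\CT^m}$. Thus the map $\sigma\mapsto\sigma^{\CT^m}$ is injective on $P$, and $|P^{\CT^m}|=|P|$.

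There is essentially no obstacle here, since the real work has already been done in Theorem \ref{theorem:lowerbound}. The only subtle point is that the Hamming-distance bound of part (1) must be understood with the convention that $\hd(P^{\CT^m})=\infty$ when $P^{\CT^m}$ has fewer than two elements; once $d>2m$, part (2) rules this out anyway, and the two conclusions combine cleanly.
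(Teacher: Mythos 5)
Your proof is correct and follows essentially the same route as the paper: apply Theorem \ref{theorem:lowerbound} to each pair of distinct permutations in $P$ to get $\hd\big(\sigma^{\CT^m},\tau^{\CT^m}\big)\geq d-2m$, and for part (2) observe that a positive Hamming distance between the contracted permutations forces them to be distinct, so contraction is injective on $P$. No gaps.
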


\begin{proof}
Note that $\hd(\sigma,\tau) \geq d$ for all distinct $\sigma,\tau \in P$, so by Theorem \ref{theorem:lowerbound},
    \[ \hd\big(\sigma^{\CT^m},\tau^{\CT^m}\big) \geq \hd(\sigma,\tau) - 2m \geq d - 2m. \]
Since $\sigma$ and $\tau$ are arbitrary, it follows that $\hd(P^{\CT^m}) \geq d - 2m$. If $d - 2m > 0$ then $\hd\big(\sigma^{\CT^m},\tau^{\CT^m}\big) > 0$ for all distinct $\sigma, \tau \in P$. Hence $\sigma^{\CT^m} \neq \tau^{\CT^m}$ for all distinct $\sigma, \tau \in P$, and therefore $\big\vert P^{\CT^m}\big\vert  = \vert P\vert $. This completes the proof.
\end{proof}

The remaining results arise from the special cases where $P$ is one of the groups $\AGL(1,q)$, $\PGL(2,q)$, $\AGamL(1,q)$, or $\PGamL(2,q)$. 
They are generalizations of \cite[Theorem 4 and Corollaries 1, 3, and 4]{Sudborough_Constructing_2018} for multiple contractions.

\begin{cor} \label{cr2}
Let $q$ be a prime power and $m$ a positive integer such that $q > 2m + 1$ and $q \not\equiv 0,1 \imod{2i+1}$ for any $i \in \{1, \ldots, m\}$. Then $M(q-m, q-2m-1) \geq q(q-1)$.
\end{cor}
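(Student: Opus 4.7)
The plan is to apply Corollary~\ref{cr1} to the group $P = \AGL(1,q)$ acting in its natural sharply $2$-transitive way on $\GF(q)$, identified with the symbol set $\{0, 1, \ldots, q-1\}$. This permutation array has size $q(q-1)$ and Hamming distance $q-1$, so provided the hypotheses of Corollary~\ref{cr1} are met with $d = q-1$, we immediately obtain $\bigl|P^{\CT^m}\bigr| = q(q-1)$ and $\hd\bigl(P^{\CT^m}\bigr) \geq q - 2m - 1$, and since $P^{\CT^m} \subseteq S_{q-m}$, this gives $M(q-m, q-2m-1) \geq q(q-1)$, as desired.

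The only real work is to show that for any $\sigma, \tau \in \AGL(1,q)$, the disjoint cycle decomposition of $\sigma^{-1}\tau$ has no cycle of odd length $\ell$ with $3 \leq \ell \leq 2m+1$. Since $\AGL(1,q)$ is a group, $\sigma^{-1}\tau$ is again of the form $x \mapsto ax + b$ for some $a \in \GF(q)^{*}$ and $b \in \GF(q)$. I would split into two cases according to whether $a = 1$ or $a \neq 1$ and analyze the cycle structure of such a map.

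If $a = 1$ and $b \neq 0$, the map $x \mapsto x + b$ is a product of $q/p$ cycles all of length equal to the characteristic $p$ of $\GF(q)$. A cycle of odd length $\ell = 2i+1 \in \{3,\ldots, 2m+1\}$ can therefore arise only if $p = 2i+1$, which forces $q \equiv 0 \pmod{2i+1}$, contradicting the hypothesis. If $a \neq 1$, the map has the unique fixed point $b/(1-a)$ and permutes the remaining $q-1$ elements as $(q-1)/\mathrm{ord}(a)$ cycles, each of length $\mathrm{ord}(a)$; here $\mathrm{ord}(a)$ denotes the multiplicative order of $a$ in $\GF(q)^{*}$. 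A cycle of odd length $\ell = 2i+1$ with $i \in \{1, \ldots, m\}$ can arise only if $\mathrm{ord}(a) = 2i+1$, which by Lagrange's theorem requires $2i+1 \mid q - 1$, again contradicting the hypothesis $q \not\equiv 1 \pmod{2i+1}$.

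With the cycle-structure check done, the hypothesis $q > 2m+1$ ensures $d - 2m = q - 2m - 1 > 0$, so Corollary~\ref{cr1}(2) yields $\bigl|P^{\CT^m}\bigr| = |P| = q(q-1)$ and Corollary~\ref{cr1}(1) gives $\hd\bigl(P^{\CT^m}\bigr) \geq q - 2m - 1$; the conclusion $M(q-m, q-2m-1) \geq q(q-1)$ follows. The only subtle point is the case analysis on elements of $\AGL(1,q)$, and in particular remembering that both the additive order (ruled out by $q \not\equiv 0$) and the multiplicative order (ruled out by $q \not\equiv 1$) must be controlled modulo each $2i+1$ for $i \in \{1,\ldots, m\}$; I do not expect any serious obstacle beyond this bookkeeping.
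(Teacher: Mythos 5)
Your proposal is correct and follows the paper's proof in its essential structure: apply Corollary~\ref{cr1} with $P = \AGL(1,q)$, $d = q-1$, and use $q > 2m+1$ to get both the size and distance conclusions. The only difference is in verifying the cycle-length hypothesis: the paper argues via Lagrange that the element order of $\sigma^{-1}\tau$ divides $\lvert\AGL(1,q)\rvert = q(q-1)$, which has no odd divisor in $\{3,\ldots,2m+1\}$ under the congruence conditions, whereas you compute the cycle type of $x \mapsto ax+b$ explicitly (all nontrivial cycles of length $p$ when $a=1$, of length $\mathrm{ord}(a)\mid q-1$ when $a\neq 1$); both verifications are valid and yield the same conclusion.
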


\begin{proof}
The hypothesis implies that $\vert \AGL(1,q)\vert  = q(q-1)$ has no odd divisor between $3$ and $2m+1$. Hence, for any distinct $\sigma,\tau \in \AGL(1,q)$, $\big\vert \sigma^{-1}\tau\big\vert $ has no odd divisor between $3$ and $2m+1$. It follows that $\sigma^{-1}\tau$ has no cycle of odd length between $3$ and $2m+1$ in its disjoint cycle decomposition. By \cite[Theorem 1 (ii)]{Deza_Max_Number_Permutations_1977} we have $\hd(\AGL(1,q)) \geq q-1$, so by Corollary \ref{cr1}.1 we have $\hd\big(\AGL(1,q)^{\CT^m}\big) \geq q - 1 - 2m$. In addition, since $q>2m+1$, we get from Corollary \ref{cr1}.2 that $\big\vert \AGL(1,q)^{\CT^m}\big\vert  = \vert \AGL(1,q)\vert  = q(q-1)$. Note that $\AGL(1,q)^{\CT^m} \subseteq S_{q-m}$. The result follows immediately.
\end{proof}

\begin{cor} \label{cr3}
Let $q$ be a prime power such that $q > 5$, $q \equiv 2 \imod{3}$, and $q \not \equiv 0,1 \imod{5}$. Then $M(q-1, q-5) \geq q(q-1)(q+1)$ and $M(q-2,q-5)\geq q(q+1)$.
\end{cor}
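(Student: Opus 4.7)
The plan is to apply Theorem \ref{theorem:lowerbound} with $m = 2$ to $\PGL(2,q)$, regarded as a permutation array on the $q+1$ points of $\PG(1,q)$, with $|\PGL(2,q)| = q(q-1)(q+1)$ and $\hd(\PGL(2,q)) = q-1$ by sharp $3$-transitivity. The first bound will follow once I show $\hd(\PGL(2,q)^{\CT^2}) \geq q-5$: since $q > 5$, this Hamming distance is positive, so the two contractions act injectively on $\PGL(2,q)$, giving $|\PGL(2,q)^{\CT^2}| = q(q-1)(q+1)$ and hence $M(q-1, q-5) \geq q(q-1)(q+1)$.

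The main subtlety is that the hypothesis of Theorem \ref{theorem:lowerbound} fails for some pairs: when $q \equiv 2 \imod{3}$, one has $3 \mid q+1$, so $\PGL(2,q)$ contains elements of order $3$ that act as products of $3$-cycles. The key ingredient is the standard classification of non-identity elements of $\PGL(2,q)$ into unipotent (order $p$, one fixed point on $\PG(1,q)$), split semisimple (order $n \mid q-1$, two fixed points), and non-split semisimple (order $n \mid q+1$, no fixed points), together with the fact that every non-trivial cycle has length equal to the order of the element. Under $q \equiv 2 \imod{3}$ and $q \not\equiv 0, 1 \imod{5}$, neither $3$ nor $5$ divides $p$ or $q-1$, so order-$3$ and order-$5$ elements in $\PGL(2,q)$ can only be non-split semisimple; consequently, whenever $\sigma^{-1}\tau$ has a cycle of length $3$ or $5$, it has no fixed points at all and $\hd(\sigma,\tau) = q+1$.

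With this in hand, I would split the analysis of distinct pairs $\sigma,\tau \in \PGL(2,q)$ into two cases. If $\sigma^{-1}\tau$ has no cycle of length $3$ or $5$, Theorem \ref{theorem:lowerbound} applies and yields $\hd(\sigma^{\CT^2},\tau^{\CT^2}) \geq \hd(\sigma,\tau) - 4 \geq q - 5$. Otherwise $\hd(\sigma,\tau) = q+1$, and two applications of Lemma \ref{lemma:hd-bounds}.1 still give $\hd(\sigma^{\CT^2},\tau^{\CT^2}) \geq (q+1) - 6 = q - 5$. For the second bound, I would then apply a standard pigeonhole-and-restriction: from any permutation array of size $N$ on $n$ symbols with Hamming distance $\geq d$, some position-value pair is realized by at least $N/n$ members, and restricting those (after relabeling) to the remaining $n-1$ symbols yields an array in $S_{n-1}$ of size $\geq N/n$ with Hamming distance $\geq d$. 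Applied to $\PGL(2,q)^{\CT^2}$, this gives $M(q-2, q-5) \geq q(q-1)(q+1)/(q-1) = q(q+1)$. The principal obstacle is the structural analysis of $\PGL(2,q)$; once one recognizes that the case in which the hypothesis of Theorem \ref{theorem:lowerbound} fails is precisely the case where $\hd(\sigma,\tau)$ is maximal, the coarser Lemma \ref{lemma:hd-bounds}.1 bound already closes the gap.
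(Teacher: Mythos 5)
Your proposal is correct and follows essentially the same route as the paper: apply Theorem \ref{theorem:lowerbound} with $m=2$ to $\PGL(2,q)$, fall back on Lemma \ref{lemma:hd-bounds}.1 for the pairs with $\hd(\sigma,\tau)=q+1$, and pass to $M(q-2,q-5)$ via the pigeonhole inequality $M(n,d)\leq nM(n-1,d)$. The only cosmetic difference is that you rule out $3$- and $5$-cycles for the remaining pairs via the classification of elements of $\PGL(2,q)$, whereas the paper argues contrapositively that $\hd(\sigma,\tau)<q+1$ forces $\sigma^{-1}\tau$ into a point stabilizer isomorphic to $\AGL(1,q)$, whose order is coprime to $15$.
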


\begin{proof}
Since $\PGL(2,q)$ has a sharply $3$-transitive action on a set of $q+1$ elements, it follows from \cite[Theorem 1 (iii)]{Deza_Max_Number_Permutations_1977} that $\hd(\PGL(2,q)) \geq (q + 1) - 3 + 1 = q - 1$. 

Let $\sigma,\tau$ be distinct elements of $\PGL(2,q)$. If $\hd(\sigma,\tau) = q + 1$, then by Lemma~\ref{lemma:hd-bounds}.1 we have
    \[ \hd\big(\sigma^{\CT^2},\tau^{\CT^2}\big) \geq [(q + 1)-3]-3 = q - 5. \]
Suppose $\hd(\sigma,\tau) \neq q + 1$. Then $\hd(\sigma,\tau) \geq \hd(\PGL(2,q)) = q - 1$. Also, since $\PGL(2,q) \subset S_{q + 1}$ and $\hd(\sigma,\tau) \neq q + 1$, $\sigma$ and $\tau$ must fix a common point $x$ and $\sigma^{-1}\tau$ is in the stabilizer of $x$ in $\PGL(2,q)$. Any point stabilizer in $\PGL(2,q)$ is isomorphic to $\AGL(1,q)$. It follows from the hypotheses that $\vert \AGL(1,q)\vert  = q(q-1)$ is not divisible by $3$ or $5$, so the disjoint cycle decomposition of $\sigma^{-1}\tau$ has no factor of length $3$ or $5$. Applying Theorem \ref{theorem:lowerbound} we obtain
    \[ \hd\big(\sigma^{\CT^2},\tau^{\CT^2}\big) \geq q - 1 - 2 \cdot 2 = q - 5. \]
Finally, since $q - 5 > 0$, we have $\big\vert \PGL(2,q)^{\CT^2}\big\vert  = \vert \PGL(2,q)\vert  = q(q-1)(q+1)$ by Corollary \ref{cr1}.2.
This proves that $M(q-1, q-5) \geq q(q-1)(q+1)$.

Since $M(n,d)\leq nM(n-1,d)$ \cite{Chu_Codes_powerline_2004}, it follows that $M(q-2,q-5)\geq M(q-1,q-5)/(q-1)\geq q(q+1)$.
\end{proof}
Corollary 5.5 also follows from \cite[Theorem 3(b)]{Sudborough_Constructing_2018}. 
Observe that the lower bound $q(q+1)$ for $M(q-2,q-5)$ is slightly better than the lower bound $q(q-1)$ in \cite[Corollary 4(ii)]{Sudborough_Constructing_2018}.
Note though that the proof of Corollary \ref{cr3} does not extend to $m>3$ contractions, for then $q+1-3m<q-1-2m$.

The following corollary is proven in a manner similar to Corollaries \ref{cr2} and \ref{cr3}. It is obtained from the application of multiple contractions to $\AGamL(1,q)$ and $\PGamL(2,q)$, whose sizes are $k q(q-1)$ and $kq(q-1)(q+1)$, respectively. The Hamming distances of the permutation arrays from these groups are $q-p^{k^*}$, where $k^*$ is the largest proper divisor of $k$ \cite[Theorems 1 and 2]{Sudborough_Constructing_2018}. 

\begin{cor} \label{cr4}
Let $q = p^k$ with $p$ prime and $k \geq 2$, and let $k^*$ be the largest proper divisor of $k$. Let $m$ be a positive integer such that $q - p^{k^*} - 2m > 0$ and $kq(q-1) \not\equiv 0 \imod{2i+1}$ for any $i \in \{1, \ldots, m\}$. The following statements hold.

\begin{enumerate}
    \item $M(q - m, q - p^{k^*} - 2m) \geq kq(q-1)$.
    \item $M(q + 1 - m, q - p^{k^*} - 2m) \geq kq(q-1)(q+1)$, if $q + 1-3m \geq q-p^{k^*}-2m$.
\end{enumerate}

\end{cor}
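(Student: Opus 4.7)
The plan is to parallel the proofs of Corollaries \ref{cr2} and \ref{cr3}, exploiting the fact, recorded in \cite[Theorems 1 and 2]{Sudborough_Constructing_2018}, that $\AGamL(1,q)$ and $\PGamL(2,q)$ have Hamming distance $q - p^{k^*}$. In each case the divisibility hypothesis $kq(q-1) \not\equiv 0 \imod{2i+1}$ for $i \in \{1, \ldots, m\}$ is used to guarantee that the disjoint cycle decomposition of $\sigma^{-1}\tau$ avoids odd cycles of length between $3$ and $2m+1$, since any such cycle length would divide the order of $\sigma^{-1}\tau$ and hence the order of the relevant group.

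For part (1) I would take distinct $\sigma, \tau \in \AGamL(1,q)$. Since $\sigma^{-1}\tau \in \AGamL(1,q)$, its order divides $|\AGamL(1,q)| = kq(q-1)$, so by hypothesis no cycle in its disjoint cycle decomposition has odd length in $\{3, \ldots, 2m+1\}$. Applying Corollary \ref{cr1}.1 with $d = q - p^{k^*}$ gives $\hd\big(\AGamL(1,q)^{\CT^m}\big) \geq q - p^{k^*} - 2m$. Because $q - p^{k^*} - 2m > 0$, Corollary \ref{cr1}.2 gives $\big\vert\AGamL(1,q)^{\CT^m}\big\vert = kq(q-1)$, and since $\AGamL(1,q) \subseteq S_q$ we have $\AGamL(1,q)^{\CT^m} \subseteq S_{q-m}$. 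Statement (1) follows immediately.

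For part (2) I would mimic the case split in the proof of Corollary \ref{cr3}. Take distinct $\sigma, \tau \in \PGamL(2,q)$. If $\hd(\sigma,\tau) = q+1$, iterating Lemma \ref{lemma:hd-bounds}.1 gives only $\hd\big(\sigma^{\CT^m},\tau^{\CT^m}\big) \geq q + 1 - 3m$. Otherwise $\sigma$ and $\tau$ agree on some point $x$ of $\PG(1,q)$, so $\sigma^{-1}\tau$ lies in the stabilizer of $x$ in $\PGamL(2,q)$, which as a permutation group on the remaining $q$ points is isomorphic to $\AGamL(1,q)$ acting on $\GF(q)$. Hence the non-trivial cycle lengths of $\sigma^{-1}\tau$ all divide $kq(q-1)$, and by the hypothesis none of them is odd in $\{3, \ldots, 2m+1\}$. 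Theorem \ref{theorem:lowerbound} with $\hd(\sigma,\tau) \geq q - p^{k^*}$ then gives $\hd\big(\sigma^{\CT^m},\tau^{\CT^m}\big) \geq q - p^{k^*} - 2m$. The supplementary hypothesis $q+1-3m \geq q-p^{k^*}-2m$ makes the latter the binding bound, so $\hd\big(\PGamL(2,q)^{\CT^m}\big) \geq q - p^{k^*} - 2m > 0$, and Corollary \ref{cr1}.2 preserves the size as $kq(q-1)(q+1)$. Since $\PGamL(2,q) \subseteq S_{q+1}$ forces $\PGamL(2,q)^{\CT^m} \subseteq S_{q+1-m}$, statement (2) follows.

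The only delicate step I anticipate is the group-theoretic identification of the one-point stabilizer of $\PGamL(2,q)$ on $\PG(1,q)$ with $\AGamL(1,q)$ acting on $\GF(q)$. This is a standard fact (the stabilizer of $\infty$ consists of the semi-affine maps $x \mapsto a x^{\phi} + b$ with $a \in \GF(q)^{*}$, $b \in \GF(q)$, and $\phi$ a field automorphism of $\GF(q)$), but it is precisely what transports the divisibility hypothesis from the ambient group $\PGamL(2,q)$ down to the stabilizer where $\sigma^{-1}\tau$ actually lives. Beyond citing this, nothing new appears: the argument is a direct combination of Corollary \ref{cr1}, Lemma \ref{lemma:hd-bounds}, and Theorem \ref{theorem:lowerbound}.
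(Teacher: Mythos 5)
Your proposal is correct and follows exactly the route the paper intends: the paper gives no explicit proof of Corollary \ref{cr4}, only the remark that it is proven in the same manner as Corollaries \ref{cr2} and \ref{cr3} using the sizes $kq(q-1)$, $kq(q-1)(q+1)$ and Hamming distance $q-p^{k^*}$ of $\AGamL(1,q)$ and $\PGamL(2,q)$, and your write-up is precisely that argument, including the correct case split for $\PGamL(2,q)$ and the identification of the point stabilizer with $\AGamL(1,q)$ to transport the divisibility hypothesis.
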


We remark that Theorem \ref{theorem:lowerbound} cannot be applied to the sporadic multiply-transitive Mathieu groups $M(11)$, $M(12)$, $M(22)$, $M(23)$, and $M(24)$ because each of their orders is divisible by 3.

\section{Analysis of Lower Bounds Obtained After Multiple Contractions}
\captionsetup[table]{list=yes}

We now inspect the lower bounds that we obtained in Section 5.
Known lower bounds can be found from tables and formulae in  \cite{Chu_Codes_powerline_2004,Smith_table_permutation_codes_2012,Sudborough_Constructing_2018,Bereg2021}.  
Unfortunately, the lower bounds for $M(q-m,q-2m-1)$ obtained from Corollary \ref{cr2} were not competitive with known lower bounds.
Similarly, lower bounds for $M(q - m, q - p^{k^*} - 2m)$ and $M(q + 1 - m, q - p^{k^*} - 2m)$ obtained from Corollary \ref{cr4} were not better than the lower bounds in~\cite{Bereg2021} or \cite{Chu_Codes_powerline_2004}.

Nevertheless, we obtain new lower bounds for $M(q-1,q-5)$ for suitable prime powers $q\leq 149$ (Table \ref{tab:Table 0.5}).
If a lower bound for $M(q-1,q-5)$ is not present in the literature, we compared our obtained lower bound instead with known lower bounds for $M(q,q-5)/q$ (see \cite{Chu_Codes_powerline_2004}).
The largest comparable lower bound is included in Table \ref{tab:Table 0.5}. 
Lower bounds in bold are improvements of lower bounds found in the literature. 

The lower bounds in Table \ref{tab:Table 0.5} were verified by applying two contractions to the  
corresponding projective general linear groups via GAP 4.11.1. 
The permutation representations of these groups were obtained through GAP's ``FinInG" package and the Hamming distances were obtained through the GAP command ``DistancePerms()". Moreover, the contraction of a permutation $\sigma$ was accomplished in GAP by multiplying $\sigma$ by the cycle $(n-1,\sigma(n-1))$ from the right.

\begin{table}[ht]
\centering
\begin{adjustbox}{width=.75\textwidth}
\begin{tabular}{|c|c|c|c|c|}
\hline
$q$ & $M(q-1,q-5)$ & Obtained Lower Bound &
Previous Lower Bound & 
Reference for Previous Lower Bound \\ \hline
 8  &   {$M(7,3)$}  &   504          &   2520  &    \cite[Proposition 1.1]{Chu_Codes_powerline_2004}            \\
 17   & { $M(16,12)$ }  &   4896          & 40320   & \cite[Table 3]{Smith_table_permutation_codes_2012}                     \\
  23  &  $M(22,18)$    &    12144         &13680     &     \cite[Table 2]{Sudborough_Constructing_2018}       \\ 
29       &   {$M(28,24)$ }  &  24360           &58968     &       \cite[Corollary 3]{Sudborough_Constructing_2018}                  \\
 32   & {$M(31,27)$ }    &  32736           &$43400\leq M(32,27)/32$    & \cite[Table 3]{Bereg2021}                       \\
  47  &    {$M(46,42)$} &     103776        & $103822\leq M(47,42)/47$   &    \cite[Table 3]{Bereg2021}           \\
53       &    {$M(52,48)$}  &  148824           &$148828\leq M(53,48)/53$     &  \cite[Table 3]{Bereg2021}                    \\
  59  &\bm{$M(58,54)$}  &   205320          & $102834\leq M(59,54)/59$   & \cite[Table 3]{Bereg2021}                \\
83    & {$M(82,78)$ }    &  571704           & $576605\leq M(83,78)/83$    &  \cite[Table 3]{Bereg2021}              \\
 89      &  \bm{$M(88,84)$}    & 704880            &$352704\leq M(89,84)/89$    & \cite[Table 3]{Bereg2021}                      \\
 107   &\bm{$M(106,102)$}     &   1224936          &  $1213592\leq M(107,102)/107$   & \cite[Table 3]{Bereg2021}             \\ 
 113   &\bm{$M(112,108)$}     &   1442784          &  $1430128\leq M(113,108)/113$   & \cite[Table 3]{Bereg2021}             \\
 128   &\bm{$M(127,123)$}     &   2097024          &  $1032510\leq M(128,123)/128$   & \cite[Table 3]{Bereg2021}             \\
 137   &\bm{$M(136,132)$}     &   2571216          &  $2552584\leq M(137,132)/137$   & \cite[Table 3]{Bereg2021}             \\
 149   &\bm{$M(148,144)$}     &   3307800          &  $1632144\leq M(149,144)/149$   & \cite[Table 3]{Bereg2021}             \\
    \hline           
\end{tabular}
\end{adjustbox}
\caption{Lower bounds for $M(n,d)$ by two contractions of $\PGL(2,q)$}
\label{tab:Table 0.5}
\end{table}

\section*{Acknowledgements}
The authors thank Dr.~I.H.~Sudborough for suggesting this problem on the Hamming distances of multiply contracted permutation arrays.

\bibliographystyle{amsplain}
\bibliography{Amarra_Loquias_Briones_revised_20Mar22}

\end{document}